\numberwithin{equation}{section}
\newtheorem{thm}{Theorem}[section]
\newtheorem{lem}[thm]{Lemma}
\newtheorem{pro}[thm]{Proposition}
\newtheorem{cor}[thm]{Corollary}
\theoremstyle{remark}
\newtheorem*{notation}{Notation}
\newtheorem{rem}[thm]{Remark}
\newtheorem*{ack}{Acknowledgement}
\newcommand{\adj}{{\operatorname{Ad}}}
\newcommand{\qdl}{\ast}
\newcommand{\str}{\phi}
\newcommand{\straa}{\psi}
\newcommand{\cq}{Q_W}
\newcommand{\Q}{\mathbb{Q}}
\newcommand{\Z}{\mathbb{Z}}
\newcommand{\C}{\mathbb{C}}
\newcommand{\map}{\rightarrow}
\newcommand{\rank}{{\operatorname{rank}}}
\newcommand{\Hom}{\operatorname{Hom}}
\newcommand{\symqdl}{Q_{\mathfrak{S}_n}}
\newcommand{\orbit}{\mathcal{O}_W}
\newcommand{\rep}{\mathcal{R}_W}
\newcommand{\nc}{c(W)}
\newcommand{\Ab}{\mathrm{Ab}}
\newcommand{\Cw}{C_W}
\newcommand{\rs}{\Phi_W}
\begin{document}
\title{
The adjoint group of a Coxeter quandle}
\author[T. Akita]{Toshiyuki Akita}
\address{Department of Mathematics, Faculty of Science, Hokkaido University,
Sapporo, 060-0810 Japan}
\email{akita@math.sci.hokudai.ac.jp}
\keywords{quandle, rack, Coxeter group, root system, Artin group}
\subjclass[2010]{Primary~20F55,20F36; Secondary~08A05,19C09}

\maketitle

\begin{abstract}
We give explicit descriptions of the adjoint group $\adj(\cq)$ of the 
Coxeter quandle $Q_W$ associated with an arbitrary Coxeter group $W$. 
The adjoint group $\adj(\cq)$ turns out to be an intermediate group
between $W$ and the corresponding Artin group $A_W$,
and fits into a central extension of $W$ by a
finitely generated free abelian group.
We construct $2$-cocycles of $W$ corresponding to the central extension.
In addition, 
we prove that the commutator subgroup of
the adjoint group $\adj(\cq)$ is  isomorphic to the commutator subgroup
of $W$.
Finally, the root system $\rs$ associated with a Coxeter group $W$ turns out to be a rack.
We prove that the adjoint group $\adj(\rs)$
of $\rs$ is isomorphic to the adjoint group of $Q_W$.
\end{abstract}

\section{Introduction}
A nonempty set $Q$ equipped with a binary operation $Q\times Q\map Q$,
$(x,y)\mapsto x\qdl y$ is called a \emph{quandle}
if it satisfies the following three conditions:
\begin{enumerate}
\item $x\qdl x=x$ $(x\in Q)$,
\item $(x\qdl y)\qdl z=(x\qdl z)\qdl (y\qdl z)$ $(x,y,z\in Q)$,
\item For all $y\in Q$,
the map $Q\map Q$ defined by $x\mapsto x\qdl y$ is bijective.
\end{enumerate}
If $X$ satisfies (2) and (3) but not necessarily (1), 
then $X$ is called a \emph{rack}.
Quandles and racks  have been studied in low dimensional topology as well as in Hopf algebras
(see Nosaka \cite{nosaka-book} and Andruskiewitsch-Gra\~na
\cite{MR1994219} for instance).
To any quandle or rack $Q$ one can associate a group $\adj(Q)$ called the
\emph{adjoint group} of $Q$ (also called  the associated group or the enveloping group in the
literature). It is defined by the presentation
\[
\adj(Q):=\langle e_x\ (x\in Q)\mid e_y^{-1}e_x e_y=e_{x\qdl y}\ (x,y\in Q)\rangle.
\]
The assignment $Q\mapsto\adj(Q)$ is a functor from the category of quandles
or racks to the category of groups.
Although adjoint groups play important roles in the study of quandles and racks, 
not much is known about the structure of them, partly because
the definition of $\adj(Q)$ by a possibly infinite
presentation is difficult to work with in explicit calculations.
We refer Eisermann \cite{MR3205568} and 
Nosaka \cite{arXiv:1505.03077,nosaka-book} for generality of adjoint groups, and
\cite{arXiv:1011.1587,MR3205568,arXiv:1505.03077,nosaka-book} for  
descriptions of adjoint groups of certain classes of quandles.

In this paper, we will study the adjoint group of a Coxeter quandle.
Let $(W,S)$ be a Coxeter system, a pair of a Coxeter group $W$ and the set $S$ of
Coxeter generators of $W$.
Following Nosaka \cite{arXiv:1505.03077},
we define the \emph{Coxeter quandle} $\cq$ associated with $(W,S)$ to be the set
of all reflections of $W$:
\[
\cq:=\bigcup_{w\in W}w^{-1}Sw.
\]
The quandle operation is given by the conjugation $x\ast y:=y^{-1}xy=yxy$.
The symmetric group $\mathfrak{S}_n$ of $n$ letters is a Coxeter group (of type $A_{n-1}$),
and the associated Coxeter quandle $\symqdl$ is nothing but the set of all transpositions.
In their paper \cite{MR2799090},
Andruskiewitsch-Fantino-Garc{\'{\i}}a-Vendramin obtained remarkable
results concerning with $\adj(\symqdl)$.
Namely, they proved that $\adj(\symqdl)$ is an intermediate group between $\mathfrak{S}_n$
and the braid group $B_n$ of $n$ strands, in the sense that
the canonical projection $B_n\twoheadrightarrow\mathfrak{S}_n$ splits
into a sequence of surjections 
$B_n\twoheadrightarrow\adj(\symqdl)\twoheadrightarrow\mathfrak{S}_n$,
and that $\adj(\symqdl)$ fits into a central extension of the form
\begin{equation}\label{eq:sym-central}
0\map \Z\map\adj(\symqdl)\overset{}{\map} \mathfrak{S}_n\map 1.
\end{equation}
See \cite{MR2799090}*{Proposition 3.2} and the proofs therein.
Furthermore, the central extension (\ref{eq:sym-central}) turns out to be the unique nontrivial central
extension of $\mathfrak{S}_n$ by $\Z$  (see Corollary \ref{cor:unique-cent}).

The primary purpose of this paper is to generalize those results to
arbitrary Coxeter quandles.
We will show that $\adj(\cq)$ is an intermediate group between $W$
and the Artin group $A_W$ associated with $W$ (Proposition \ref{pro-artin-to-adj}), 
and that $\adj(\cq)$ fits into a central extension of the form
\begin{equation}\label{eq:central}
0\map \Z^{\nc}\map\adj(\cq)\overset{\str}{\map} W\map 1,
\end{equation}
where $\nc$ is the number of conjugacy classes of elements in $\cq$
(Theorem \ref{thm-main}).
As a byproduct of Theorem \ref{thm-main}, we will determine the rational
cohomology ring of $\adj(\cq)$ (Corollary \ref{rational-cohomology}).

In case $c(W)=1$, the central extension (\ref{eq:central}) turns out to be
the unique nontrivial central extension of $W$ by $\Z$ (Corollary \ref{cor:unique-cent}).
As is known, the central extension (\ref{eq:central}) corresponds to
the cohomology class $u_\str\in H^2(W,\Z^{\nc})$.
We will construct 2-cocycles of $W$ representing
$u_\str$ (Proposition \ref{thm:cocyle1} and
Theorem \ref{thm:cocycle2}).

Alternatively, Eisermann \cite{MR3205568} 
claimed that
$\adj(\symqdl)$ is isomorphic to the semidirect product $A_n\rtimes\Z$
where $A_n$ is the alternating group on $n$ letters,
but he did not write down the proof. 
We will generalize his result to Coxeter quandles 
(Theorem \ref{thm:comm} and Corollary \ref{gen-eiser}).

In the final section, we deal with root systems.
To each Coxeter system $(W,S)$, one can associate the root system $\rs\subset V$
by using the geometric representation $W\map GL(V)$,  where $V$ is a real vector
space with the basis $\{\alpha_s\mid s\in S\}$.
The root system $\rs$ turns out to
be a rack with the rack operation $\alpha*\beta:=t_\beta(\alpha)$
where $t_\beta\in GL(V)$ is the reflection along $\beta$.
We close this paper by proving $\adj(\rs)\cong \adj(\cq)$
(Theorem \ref{thm:root}).

\begin{notation}
Let $G$ be a group, $g,h\in G$ elements of $G$ and $m\geq 2$ a natural number.
Define
\[
(gh)_m:=\underbrace{ghghg\cdots}_m\in G.
\]
For example, $(gh)_2=gh,(gh)_3=ghg,(gh)_4=ghgh$.
Let $G_\Ab:=G/[G,G]$ be the abelianization of $G$,
$\Ab_G:G\map G_\Ab$ the natural projection,
and write $[g]:=g[G,G]\in G_\Ab$ for $g\in G$.
For a quandle $Q$ and elements $x_k\in Q$ $(k=1,2,3,\dots)$, we denote
\[
x_1\qdl x_2\qdl x_3\qdl\cdots\qdl x_\ell:=
(\cdots((x_1\qdl x_2)\qdl x_3)\qdl\cdots)\qdl x_\ell,
\]
for simplicity.
\end{notation}

\section{Coxeter groups and Coxeter quandles}\label{sec:prel}
\subsection{Coxeter groups}
Let $S$ be a finite set and $m:S\times S\rightarrow \mathbb{N}\cup\{\infty\}$
a map satisfying the following conditions:
\begin{enumerate}
\item $m(s,s)=1$ for all $s\in S$;
\item $2\leq m(s,t)=m(t,s)\leq\infty$ for all distinct $s,t\in S$.
\end{enumerate}
The map $m$ is represented by the \emph{Coxeter graph} $\Gamma$ whose
vertex set is $S$ and whose edges are the unordered pairs $\{s,t\}\subset S$
such that $m(s,t)\geq 3$. The edges with $m(s,t)\geq 4$ are labeled
by the number $m(s,t)$.
The \emph{Coxeter system} associated with $\Gamma$
is the pair $(W,S)$, where $W$ is the group generated by $s\in S$
and the fundamental relations $(st)^{m(s,t)}=1$ $(s,t\in S, m(s,t)<\infty)$:
\begin{equation}\label{Coxeter-pres}
W:=\langle s\in S\ |\ (st)^{m(s,t)}=1 (s,t\in S, m(s,t)<\infty)\rangle.
\end{equation}
The group $W$ is called the \emph{Coxeter group} (of type $\Gamma$),
and elements of $S$ are called \emph{Coxeter generators} of $W$
(also called simple reflections in the literature).
Note that the order of the product $st$ is precisely $m(s,t)$.
In particular, every Coxeter generator $s\in S$ has order $2$.
It is easy to check that the defining relations in (\ref{Coxeter-pres}) 
are equivalent to the following relations
\begin{equation}\label{pres-Coxeter-alt}
s^{2}=1\ (s\in S),\
(st)_{m(s,t)}=(ts)_{m(s,t)}\ (s,t\in S,s\not=t, m(s,t)<\infty).
\end{equation}
Finally,
the \emph{odd subgraph} $\Gamma_{odd}$ is a subgraph of $\Gamma$
whose vertex set is $S$ and whose edges are the unordered pairs $\{s,t\}\subset S$
such that $m(s,t)$ is an odd integer.
We refer
Bj\"orner-Brenti \cite{MR2133266},
Bourbaki \cite{bourbaki}, 
Davis \cite{MR2360474}, and
Humphreys \cite{humphreys}
for further details of Coxeter groups.

\subsection{Conjugacy classes of reflections}
Let
\[
\cq:=\bigcup_{w\in W}w^{-1}Sw
\]
be the set of reflections in $W$ as in the introduction (the underlying set of the Coxeter quandle).
Let $\orbit$ be the set of conjugacy classes of elements of $\cq$ under $W$,
and $\rep$ a complete set of representatives of conjugacy classes.
We may choose such that $\rep\subseteq S$.
Let $\nc$ be the cardinality of $\orbit$.
The following two propositions are well-known and easy to prove
(see Bj\"orner-Brenti \cite{MR2133266}*{Chapter 1, Exercises 16--17}
for instance).
\begin{pro}\label{pro-orbit-odd}
The elements of $\orbit$ are in one-to-one correspondence
with the connected components of $\Gamma_{odd}$.
Consequently, $\nc$ equals to the number of connected
components of $\Gamma_{odd}$. 
\end{pro}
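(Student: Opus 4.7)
The plan is to define a natural map $\Psi \colon \orbit \to \pi_0(\G_{odd})$ and show it is a bijection; the claim about $\nc$ then follows immediately. Since each reflection in $\cq$ is $W$-conjugate to some element of $S$ by the very definition of $\cq$, every class meets $S$, so I would set $\Psi([s])$ equal to the connected component of $s \in S$ in $\G_{odd}$. The whole statement thus reduces to the following assertion: two Coxeter generators $s,t\in S$ are $W$-conjugate if and only if they lie in the same connected component of $\G_{odd}$.

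For the ``if'' direction it is enough to treat the case when $\{s,t\}$ is an edge of $\G_{odd}$ and then chain the resulting conjugacies along a path connecting $s$ to $t$. So assume $m:=m(s,t)$ is odd, say $m=2k+1$. Working inside the dihedral subgroup $\langle s,t\rangle \subset W$ of order $2m$, I would use the braid relation $(st)_m=(ts)_m$ together with $s^2=t^2=1$ to verify directly that $w\,s\,w^{-1}=t$ with $w=(st)^k$. This gives $[s]=[t]$ in $\orbit$.

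For the ``only if'' direction, I would separate conjugacy classes using sign homomorphisms, one per connected component. For each component $C$ of $\G_{odd}$, define $\sigma_C\colon S\to\{\pm 1\}$ by $\sigma_C(s)=-1$ if $s\in C$ and $\sigma_C(s)=+1$ otherwise, and extend to a group homomorphism $\sigma_C\colon W\to\{\pm 1\}$. To verify this extension is well-defined, I must check $\sigma_C((st)^{m(s,t)})=1$ whenever $m(s,t)<\infty$. If $s,t$ lie in the same component of $\G_{odd}$ then $\sigma_C(s)\sigma_C(t)=1$ and the relation holds trivially. Otherwise $\{s,t\}$ is not an edge of $\G_{odd}$, so $m(s,t)$ must be even, and $(\sigma_C(s)\sigma_C(t))^{m(s,t)}=(\pm 1)^{m(s,t)}=1$. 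Since $\sigma_C$ factors through $W_{\Ab}$, it is constant on conjugacy classes; by construction it distinguishes the component $C$ from every other component, so simple reflections from distinct components cannot be conjugate.

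The main obstacle, such as it is, lies in getting the explicit dihedral computation right and bookkeeping the case $m(s,t)=\infty$ when building $\sigma_C$; both steps are elementary. Combining the two directions shows $\Psi$ is well-defined and injective, while surjectivity is immediate because every component of $\G_{odd}$ contains some $s\in S$. This establishes the bijection $\orbit\cong\pi_0(\G_{odd})$ and the equality $\nc=|\pi_0(\G_{odd})|$.
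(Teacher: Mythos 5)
Your proof is correct. The paper does not actually prove this proposition—it cites it as a well-known fact (Bj\"orner--Brenti, Chapter 1, Exercises 16--17)—and your argument (the identity $(st)^k s (st)^{-k}=t$ for $m(s,t)=2k+1$ to conjugate generators along edges of $\Gamma_{odd}$, plus one sign character per component of $\Gamma_{odd}$ to separate classes) is exactly the standard solution to that exercise, with the $m(s,t)=\infty$ and non-edge cases handled correctly.
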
\noindent
To be precise, the conjugacy class of $s\in S$ corresponds to
the connected component of $\Gamma_{odd}$ containing $s\in S$.
\begin{pro}\label{pro-coxeter-abel}
$W_\Ab$ is the elementary abelian $2$-group with a basis
$\{[s]\in W_\Ab\mid s\in\rep\}$. In particular,
$W_\Ab\cong (\Z/2)^{\nc}$.
\end{pro}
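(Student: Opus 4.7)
The plan is to work directly from the Coxeter presentation (\ref{pres-Coxeter-alt}) after applying the abelianization functor, and then combine the resulting description with Proposition \ref{pro-orbit-odd}.

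First I would observe that $W_\Ab$ has a presentation as an abelian group obtained by imposing commutativity on the Coxeter generators, keeping the relations $s^2=1$ for $s\in S$ and $(st)^{m(s,t)}=1$ for $s\neq t$ with $m(s,t)<\infty$. Because the generators commute and each has order $2$, the relation $(st)^{m(s,t)}=1$ becomes $[s]^{m(s,t)}[t]^{m(s,t)}=1$. If $m(s,t)$ is even this is automatic, and if $m(s,t)$ is odd it collapses to $[s]=[t]$. Therefore $W_\Ab$ is the quotient of the elementary abelian $2$-group $(\Z/2)^{S}$ by the relations $[s]=[t]$ whenever $\{s,t\}$ is an edge of $\Gamma_{odd}$; equivalently, $[s]=[t]$ for all $s,t$ in the same connected component of $\Gamma_{odd}$.

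Next I would invoke Proposition \ref{pro-orbit-odd}: the connected components of $\Gamma_{odd}$ are in bijection with the conjugacy classes of reflections, and $\rep\subseteq S$ picks one vertex from each component. Thus the quotient just described is precisely the elementary abelian $2$-group on the set $\{[s]\mid s\in\rep\}$, which gives the claimed basis and the isomorphism $W_\Ab\cong(\Z/2)^{\nc}$.

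To be safe about independence of the basis, I would exhibit an explicit homomorphism $\str\colon W\to(\Z/2)^{\nc}$ sending each $s\in S$ to the basis vector indexed by the conjugacy class of $s$. Well-definedness reduces to checking the Coxeter relations: $s^2\mapsto 0$ trivially, and $(st)^{m(s,t)}$ maps to $m(s,t)$ times the sum of the two basis vectors, which vanishes because either $m(s,t)$ is even or else $s$ and $t$ lie in the same component of $\Gamma_{odd}$ (hence the two basis vectors coincide and the coefficient is $2m(s,t)$). This $\str$ is surjective and factors through $W_\Ab$, providing the inverse to the natural map from $(\Z/2)^{\nc}$ onto $W_\Ab$ constructed above. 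There is no real obstacle here; the only point requiring care is the bookkeeping that shows the only nontrivial relations surviving abelianization are the identifications $[s]=[t]$ for odd edges, and Proposition \ref{pro-orbit-odd} handles the translation to conjugacy classes.
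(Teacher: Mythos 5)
Your proof is correct. Note that the paper does not actually supply an argument for this proposition: it is stated as well known, with a pointer to Bj\"orner--Brenti, Chapter~1, Exercises~16--17. The argument you give is the standard one and is exactly what that exercise intends: abelianizing the presentation (\ref{Coxeter-pres}) turns $(st)^{m(s,t)}=1$ into $[s]^{m(s,t)}[t]^{m(s,t)}=1$, which is vacuous for $m(s,t)$ even and forces $[s]=[t]$ for $m(s,t)$ odd, so $W_\Ab$ is the $\Z/2$-vector space on the components of $\Gamma_{odd}$; Proposition \ref{pro-orbit-odd} then translates components into conjugacy classes and identifies $\{[s]\mid s\in\rep\}$ as a basis. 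Your closing verification via an explicit homomorphism $W\to(\Z/2)^{\nc}$ is a clean way to certify independence of the basis, though it is not strictly needed once one accepts that abelianization commutes with presentations; just avoid reusing the symbol $\str$ for that homomorphism, since the paper reserves it for the projection $\adj(\cq)\map W$.
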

\subsection{Coxeter quandles}
Now we turn our attention to Coxeter quandles.
Let $\cq$ be the Coxeter quandle associated with $(W,S)$ and
\[
\adj(\cq):=\langle e_x\ (x\in\cq)\mid e_y^{-1}e_x e_y
=e_{x\qdl y}\ (x,y\in\cq)\rangle
\]
the adjoint group of $\cq$.
Observe that
\begin{equation}\label{adj-rel1}
e_y^{-1}e_x e_y=e_{x\qdl y}=e_{y^{-1}xy}
\end{equation}
and
\begin{equation}\label{adj-rel2}
e_ye_xe_y^{-1}=e_{x\qdl y},
\end{equation}
where (\ref{adj-rel2}) follows from $e_y^{-1}e_{x\qdl y}e_y
=e_{x\qdl y\qdl y}=e_x$.
\begin{pro}\label{finite-generation}
$\adj(\cq)$ is generated by $e_s$ $(s\in S)$.
\end{pro}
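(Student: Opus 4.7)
The plan is a direct induction on the length of an expression of a Weyl group element that conjugates a Coxeter generator to the given reflection. The content is entirely that the defining relations of $\adj(\cq)$ allow us to rewrite any $e_x$ in terms of the $e_s$ with $s\in S$, once we observe that every reflection lies in the $W$-orbit of some $s\in S$.

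Concretely, fix $x\in\cq$ and choose $s\in S$ and $w\in W$ with $x=w^{-1}sw$. Write $w=s_1 s_2\cdots s_k$ as any word in the Coxeter generators. Define $x_0:=s$ and recursively $x_i:=s_i x_{i-1} s_i$ for $1\leq i\leq k$, so that $x_k=x$. Since $s_i^2=1$ in $W$, each $x_i$ is a reflection, hence an element of $\cq$, and moreover $x_i=x_{i-1}\qdl s_i$ in the Coxeter quandle. Applying the defining relation (\ref{adj-rel1}) to the pair $(x_{i-1},s_i)\in\cq\times\cq$ gives
\[
e_{x_i}=e_{s_i}^{-1}\,e_{x_{i-1}}\,e_{s_i}.
\]
By induction on $i$, each $e_{x_i}$ lies in the subgroup of $\adj(\cq)$ generated by $\{e_t\mid t\in S\}$. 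In particular, $e_x=e_{x_k}$ lies in this subgroup.

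Since every generator $e_x$ of $\adj(\cq)$ is thereby expressed as a word in $\{e_s\mid s\in S\}$, the proposition follows. There is no real obstacle here; the only thing to verify is that the intermediate conjugates $x_i$ remain in $\cq$, which is immediate from the definition of $\cq$ as the union of $W$-conjugates of $S$. The argument does not require $w$ to be expressed reduced, so we do not need to invoke any structural theorem about Coxeter groups beyond the fact that $W$ is generated by $S$.
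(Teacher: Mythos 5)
Your proof is correct and is essentially the same argument as the paper's: both express $x\in\cq$ as an iterated conjugate $w^{-1}sw$ with $w$ written as a word in $S$, and then apply the defining relation $e_{x\qdl y}=e_y^{-1}e_xe_y$ letter by letter to rewrite $e_x$ as a word in the $e_s$. Your version merely makes the induction explicit where the paper compresses it into a single chain of equalities using the left-normed $\qdl$ notation.
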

\begin{proof}
Given $x\in\cq$, we can express $x$ as
$x=(s_1s_2\cdots s_\ell)^{-1}s_0(s_1s_2\cdots s_\ell)$ for some $s_i\in S$
$(0\leq i\leq\ell)$ by the definition of $\cq$. Applying (\ref{adj-rel1}), we have
\[e_x=e_{(s_1s_2\cdots s_\ell)^{-1}s_0(s_1s_2\cdots s_\ell)}=
e_{s_0\qdl s_1\qdl s_2\qdl\cdots\qdl s_\ell} 
=(e_{s_1}e_{s_2}\cdots e_{s_\ell})^{-1}e_{s_0}
(e_{s_1}e_{s_2}\cdots e_{s_\ell}),\]
proving the proposition.
\end{proof}
\begin{pro}\label{pro-abel-adj}
$\adj(\cq)_\Ab$ is the free abelian group with a basis
$\{[e_s]\mid s\in\rep\}$. In particular,
$\adj(\cq)_\Ab\cong \Z^{\nc}$.
\end{pro}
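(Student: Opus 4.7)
The plan is to prove two inclusions: first, that the classes $\{[e_s]\mid s\in\rep\}$ generate $\adj(\cq)_\Ab$; second, that they are $\Z$-linearly independent by constructing a surjection $\adj(\cq)_\Ab\twoheadrightarrow\Z^{\nc}$ in which they map to the standard basis.

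For the generating claim, I would combine Proposition \ref{finite-generation} with the defining conjugation relations (\ref{adj-rel1}). The former shows that $\adj(\cq)_\Ab$ is generated by $\{[e_s]\mid s\in S\}$. Passing (\ref{adj-rel1}) through the abelianization collapses $[e_x]=[e_{y^{-1}xy}]$ for any $x,y\in\cq$, so $[e_s]$ depends only on the $W$-conjugacy class of $s$. Invoking Proposition \ref{pro-orbit-odd} (conjugacy classes correspond to components of $\Gamma_{odd}$) and the fact that $\rep\subseteq S$ is a set of representatives, we conclude that already $\{[e_s]\mid s\in\rep\}$ generates $\adj(\cq)_\Ab$.

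For linear independence, I would define a map $\straa\colon \adj(\cq)\to\Z^{\nc}$ by sending $e_x$ to the standard basis vector $\varepsilon_{[x]}$ indexed by the conjugacy class $[x]\in\orbit$. To see this is well-defined, check the defining relations: the relation $e_y^{-1}e_xe_y=e_{x\qdl y}$ requires $\varepsilon_{[x]}=\varepsilon_{[x\qdl y]}$ in $\Z^{\nc}$, which holds because $x$ and $x\qdl y=y^{-1}xy$ are $W$-conjugate and hence lie in the same class. Since the target is abelian, $\straa$ factors through $\adj(\cq)_\Ab$, and by construction it sends $[e_s]$ ($s\in\rep$) to the distinct standard basis vectors of $\Z^{\nc}$. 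This forces the generating set $\{[e_s]\mid s\in\rep\}$ to be a free basis, and establishes the isomorphism $\adj(\cq)_\Ab\cong\Z^{\nc}$.

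Neither step is really the ``hard part''; the only thing to be careful about is verifying well-definedness of $\straa$ on the level of the defining presentation of $\adj(\cq)$, which reduces to the tautology that $W$-conjugation preserves $W$-conjugacy classes. Once $\straa$ is in hand, the conclusion is immediate because it exhibits an explicit left inverse for the natural map $\Z^{\rep}\to\adj(\cq)_\Ab$.
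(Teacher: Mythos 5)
Your proof is correct and is essentially the paper's argument: the paper also computes $\adj(\cq)_\Ab$ from the presentation, noting that the abelianized relations identify $[e_x]$ with $[e_y]$ exactly when $x,y$ are $W$-conjugate (using that $W$ is generated by $S\subseteq\cq$, so $W$-conjugacy is generated by conjugation by reflections), and concludes it is free abelian on $\{[e_s]\mid s\in\rep\}$. Your explicit homomorphism $e_x\mapsto\varepsilon_{[x]}$ just makes rigorous the independence direction that the paper leaves implicit.
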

\begin{proof}
By the definition of $\adj(\cq)$, the abelianization $\adj(\cq)_\Ab$ is
generated by $[e_x]$ $(x\in\cq)$ subject to the relations
$[e_x]=[e_{y^{-1}xy}]$, $[e_x][e_y]=[e_y][e_x]$ $(x,y\in\cq)$.
Consequently, $[e_x]=[e_y]\in\adj(\cq)_\Ab$
if and only if $x,y\in\cq$ are conjugate in $W$.
We conclude that $\adj(\cq)_\Ab$ is the free abelian group with a
basis $\{[e_s]\mid s\in\rep\}$.
\end{proof}
Let $\str:\adj(\cq)\map W$ be a surjective homomorphism
defined by $e_x\mapsto x$, which is well-defined by virtue of (\ref{adj-rel1}),
and let $\Cw:=\ker\str$ be its kernel.
\begin{lem}\label{pro-central-subgp}
$\Cw$ is a central subgroup of $\adj(\cq)$.
\end{lem}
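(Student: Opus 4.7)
The plan is to establish the conjugation formula
\[
g^{-1} e_x g = e_{\str(g)^{-1} x \str(g)} \qquad (x \in \cq,\ g \in \adj(\cq)),
\]
from which the lemma is immediate: if $g \in \Cw = \ker \str$, then $\str(g) = 1$, so $g^{-1} e_x g = e_x$ for every $x \in \cq$, and since $\{e_x \mid x \in \cq\}$ generates $\adj(\cq)$ (by definition, or more economically by Proposition~\ref{finite-generation}), $g$ lies in the center. Note that the right-hand side is well defined because $\cq$ is closed under conjugation by $W$.

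To prove the formula I will induct on the word length $n$ of $g$ written as a product $g = e_{y_1}^{\epsilon_1} \cdots e_{y_n}^{\epsilon_n}$ in the defining generators. The case $n=0$ is trivial. For $n=1$, the subcase $g = e_y$ is exactly (\ref{adj-rel1}), while the subcase $g = e_y^{-1}$ uses (\ref{adj-rel2}) together with the observation that $\str(e_y^{-1}) = y^{-1} = y$ in $W$ (since every reflection is an involution); in both subcases $\str(g) = y$ and the conjugate equals $e_{y^{-1} x y}$. For the inductive step, write $g = e_y^{\epsilon} h$ with $h$ of length $n-1$, so that
\[
g^{-1} e_x g \;=\; h^{-1} (e_y^{-\epsilon} e_x e_y^{\epsilon}) h \;=\; h^{-1} e_{y^{-1} x y}\, h
\]
by the $n=1$ case applied to $e_y^{\pm 1}$; then the inductive hypothesis applied to $h$ and the reflection $y^{-1} x y \in \cq$ rewrites this as $e_{\str(h)^{-1} y^{-1} x y\, \str(h)} = e_{\str(g)^{-1} x \str(g)}$, using $\str(g) = y\,\str(h)$ (which holds for both signs of $\epsilon$ because $y^2=1$).

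There is essentially no obstacle: the argument is routine and reflects the fact that the quandle relations were designed precisely so that $\adj(\cq)$ acts on $\cq$ through $\str$ by conjugation. The only point requiring a moment's care is handling both exponents $\pm 1$ of the generators, which is absorbed by $y^{2} = 1$ in $W$ and by the two relations (\ref{adj-rel1}) and (\ref{adj-rel2}).
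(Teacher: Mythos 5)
Your proposal is correct and follows essentially the same route as the paper: both establish that conjugating $e_x$ by a word $g$ in the generators yields $e_{\str(g)^{-1}x\str(g)}$ (the paper writes this as $e_{x\qdl y_1\qdl\cdots\qdl y_\ell}$ and notes the exponents $\epsilon_i$ are irrelevant since each $y_i$ is an involution), and then conclude from $\str(g)=1$ for $g\in\Cw$. Your explicit induction on word length merely formalizes the paper's ``applying (\ref{adj-rel1}) and (\ref{adj-rel2}) repeatedly'' step.
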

\begin{proof}
Given $g\in \Cw$, it suffices to prove $g^{-1}e_x g=e_x$ for all $x\in\cq$.
To do so, set $g=e_{y_1}^{\epsilon_1}e_{y_2}^{\epsilon_2}\cdots
e_{y_\ell}^{\epsilon_\ell}$ $(\epsilon_i\in\{\pm 1\})$.
Applying (\ref{adj-rel1}) and (\ref{adj-rel2}), we have
\[
g^{-1}e_x g=(e_{y_1}^{\epsilon_1}e_{y_2}^{\epsilon_2}
\cdots e_{y_\ell}^{\epsilon_\ell})^{-1}e_x
(e_{y_1}^{\epsilon_1}e_{y_2}^{\epsilon_2}\cdots e_{y_\ell}^{\epsilon_\ell})
=e_{x\qdl y_1\qdl y_2\qdl\cdots\qdl y_\ell}.
\]
Now $x\qdl y_1\qdl y_2\qdl\cdots\qdl y_\ell=
(y_1y_2\cdots y_\ell)^{-1}x(y_1y_2\cdots y_\ell)$,
and the proposition follows from $\str(g)=y_1^{\epsilon_1}y_2^{\epsilon_2}\cdots y_\ell^{\epsilon_\ell}
=y_1y_2\cdots y_\ell=1$.
\end{proof}
\begin{lem}\label{pro-conj-same}
If $x,y\in\cq$ are conjugate in $W$, then $e_x^2=e_y^2\in \Cw$.
\end{lem}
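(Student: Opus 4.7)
The plan is to reduce the statement to the centrality result just established (Lemma \ref{pro-central-subgp}) and the fact that every reflection in $W$ has order two.

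First I would verify that $e_x^2$ lies in $\Cw$: since $x\in\cq$ is a reflection, $x^2=1$ in $W$, so $\str(e_x^2)=x^2=1$ and hence $e_x^2\in\ker\str=\Cw$. The same holds for $e_y^2$. So both squares sit in the central subgroup $\Cw$, and what remains is to identify them.

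Next I would promote the $W$-conjugacy of $x$ and $y$ to a conjugacy of $e_x$ and $e_y$ in $\adj(\cq)$. Writing $y=w^{-1}xw$ for some $w\in W$ and expressing $w=s_1s_2\cdots s_\ell$ with $s_i\in S\subseteq\cq$, we get
\[
y=(s_1s_2\cdots s_\ell)^{-1}x(s_1s_2\cdots s_\ell)=x\qdl s_1\qdl s_2\qdl\cdots\qdl s_\ell.
\]
Setting $g:=e_{s_1}e_{s_2}\cdots e_{s_\ell}\in\adj(\cq)$ and iterating the relation (\ref{adj-rel1}) exactly as in the proof of Proposition \ref{finite-generation}, we obtain
\[
e_y=e_{x\qdl s_1\qdl\cdots\qdl s_\ell}=g^{-1}e_x g.
\]

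Finally, squaring gives $e_y^2=g^{-1}e_x^2 g$, and since $e_x^2\in\Cw$ is central in $\adj(\cq)$ by Lemma \ref{pro-central-subgp}, the conjugation by $g$ is trivial, so $e_y^2=e_x^2$.

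I do not anticipate a genuine obstacle: the argument is a three-line assembly of (i) order two of reflections, (ii) the standard lifting of $\qdl$-conjugacy to group conjugacy inside $\adj(\cq)$, and (iii) the centrality of $\Cw$. The only point that requires a moment's care is ensuring that the expression $w=s_1\cdots s_\ell$ really produces, via (\ref{adj-rel1}), the element $e_y$ as a conjugate of $e_x$ by a word in the $e_{s_i}$; this is precisely the computation already carried out in Proposition \ref{finite-generation}, so no new work is needed.
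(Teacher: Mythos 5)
Your proof is correct and follows essentially the same route as the paper: both arguments rest on writing $e_y$ as a conjugate of $e_x$ in $\adj(\cq)$ via (\ref{adj-rel1}) and then using the centrality of $\Cw$ (Lemma \ref{pro-central-subgp}) to conclude $e_y^2=g^{-1}e_x^2g=e_x^2$. The only cosmetic difference is that the paper conjugates by a single $e_y$ with $y\in\cq$ and leaves the iteration implicit, whereas you conjugate by the full word $e_{s_1}\cdots e_{s_\ell}$ in one step.
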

\begin{proof}
It is obvious that $e_x^2\in \Cw$ for all $x\in\cq$.
Since $\Cw$ is a central subgroup, 
$e_x^2=e_y^{-1}e_x^2e_y=(e_y^{-1}e_xe_y)^2=e^2_{y^{-1}xy}$
holds for all $x,y\in\cq$, which implies the lemma.
\end{proof}

\section{Artin groups and the proof of the main result}\label{sec:main}
Now we state the main result of this paper:
\begin{thm}\label{thm-main}
The central subgroup $\Cw$ is the free abelian group with a basis $\{e_s^2\mid s\in\rep\}$.
In particular, $\Cw\cong\Z^{\nc}$.
\end{thm}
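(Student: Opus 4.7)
The argument splits into two parts: (i) showing that the squares $e_s^2$ for $s\in\rep$ are $\Z$-linearly independent in $\Cw$, and (ii) showing that they already generate $\Cw$. Part (i) comes almost for free. Let $N:=\langle e_s^2\mid s\in\rep\rangle\subseteq\adj(\cq)$. Every reflection is an involution, so $\str(e_x^2)=x^2=1$ and $N\subseteq\Cw$, whence $N$ is abelian by Lemma \ref{pro-central-subgp}. Under the abelianization map $\adj(\cq)\twoheadrightarrow\adj(\cq)_\Ab\cong\Z^{\nc}$ of Proposition \ref{pro-abel-adj}, the element $e_s^2$ is sent to $2[e_s]$; since $\{[e_s]\mid s\in\rep\}$ is a $\Z$-basis of $\adj(\cq)_\Ab$, no nontrivial $\Z$-combination of the $e_s^2$ can vanish.

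For (ii) the plan is to prove that $\adj(\cq)/N\cong W$, which is equivalent to $N=\Cw$. Because $N\subseteq\Cw=\ker\str$, the homomorphism $\str$ descends to a surjection $\bar\str:\adj(\cq)/N\twoheadrightarrow W$, so it suffices to produce an inverse. I would use the Artin group $A_W$: Proposition \ref{pro-artin-to-adj} supplies a surjection $A_W\twoheadrightarrow\adj(\cq)$ sending the Artin generator $\sigma_s$ to $e_s$. Composing with the projection $\adj(\cq)\twoheadrightarrow\adj(\cq)/N$, every $\sigma_s^2$ maps to $[e_s^2]=1$, so the composite factors through the quotient $A_W/\langle\!\langle\sigma_s^2\mid s\in S\rangle\!\rangle$. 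A standard manipulation shows that adjoining the relations $\sigma_s^2=1$ to the Artin relations forces $(\sigma_s\sigma_t)^{m(s,t)}=1$, so this quotient is nothing but the Coxeter presentation (\ref{pres-Coxeter-alt}), i.e.\ it is $W$. The resulting surjection $\iota:W\twoheadrightarrow\adj(\cq)/N$ satisfies $\bar\str\circ\iota=\mathrm{id}_W$ on the Coxeter generators $S$, hence on all of $W$; conversely, $\iota\circ\bar\str$ fixes each generator $[e_s]$ of $\adj(\cq)/N$, and $\adj(\cq)/N$ is generated by these by Proposition \ref{finite-generation}, so $\iota\circ\bar\str=\mathrm{id}$ as well. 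Therefore $\bar\str$ and $\iota$ are mutually inverse, giving $N=\Cw$.

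The main obstacle is the input from the Artin group, i.e.\ Proposition \ref{pro-artin-to-adj}; this amounts to verifying the braid relations $(e_se_t)_{m(s,t)}=(e_te_s)_{m(s,t)}$ inside $\adj(\cq)$, which in turn comes from the quandle relations (\ref{adj-rel1})--(\ref{adj-rel2}) applied to the conjugation operation $s\qdl t=tst$. Once that is available the rest is bookkeeping, invoking only the equivalence of the two Coxeter presentations (\ref{Coxeter-pres}) and (\ref{pres-Coxeter-alt}) together with the preliminary fact that $N$ is central and independent in the abelianization.
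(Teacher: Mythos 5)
Your proof is correct, but it reaches the conclusion by a route that differs from the paper's in one substantive respect. For the generation step the two arguments are essentially equivalent: the paper proves that the pure Artin group $P_W$ is the normal closure of $\{a_s^2\mid s\in S\}$ in $A_W$ (Proposition \ref{pres-pure-Artin}, via the third isomorphism theorem applied to the free-group presentations) and pushes this forward along the surjection $\straa:A_W\map\adj(\cq)$ to conclude that $\Cw$ is generated by the $e_s^2$; you instead quotient $\adj(\cq)$ by $N=\langle e_s^2\rangle$ and identify the result with $W$ by observing that the Artin presentation plus the relations $a_s^2=1$ is the Coxeter presentation. These are two packagings of the same presentation-theoretic fact. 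Where you genuinely diverge is the freeness/rank step: the paper computes $\rank(\Cw)=\nc$ via the five-term exact sequence in rational homology together with the vanishing of $H_*(W,\Q)$ in positive degrees (Proposition \ref{pro-rank}), a nontrivial external input, whereas you simply note that $e_s^2\mapsto 2[e_s]$ under abelianization and that $\{[e_s]\mid s\in\rep\}$ is a basis of $\adj(\cq)_\Ab\cong\Z^{\nc}$ by Proposition \ref{pro-abel-adj}, so the $e_s^2$ generate a free abelian group on the nose. Your version is more elementary and self-contained; the paper's homological computation is not wasted there, since the same acyclicity argument is reused immediately for Corollary \ref{rational-cohomology}. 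One small point to patch: your claim that \emph{every} $\sigma_s^2$ ($s\in S$) dies in $\adj(\cq)/N$ needs $e_s^2\in N$ for all $s\in S$, not just $s\in\rep$; this is exactly Lemma \ref{pro-conj-same} combined with the choice $\rep\subseteq S$ of conjugacy representatives, and you should cite it explicitly.
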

As a consequence of Theorem \ref{thm-main}, 
$\adj(\cq)$ fits into a central extension of the form
$0\map\Z^{\nc}\map\adj(\cq)\map W\map 1$
as stated in the introduction.
We begin with the determination of the rank of $\Cw$.
\begin{pro}\label{pro-rank}
$\rank(\Cw)=\nc$.
\end{pro}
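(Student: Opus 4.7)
The plan is to sandwich $\rank(\Cw)$ between $\nc$ from below and $\nc$ from above, exploiting the abelianization on one side and the Artin-group surjection on the other.

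For the lower bound, I would examine the images of the squares $e_s^2$ ($s\in\rep$) under the abelianization map $\adj(\cq)\map\adj(\cq)_\Ab$. Each such image is $[e_s^2]=2[e_s]$, and by Proposition~\ref{pro-abel-adj} the classes $\{[e_s]\mid s\in\rep\}$ form a $\Z$-basis of $\adj(\cq)_\Ab\cong\Z^{\nc}$. Hence the elements $2[e_s]$ are $\Z$-linearly independent in $\adj(\cq)_\Ab$, which forces $\{e_s^2\mid s\in\rep\}\subseteq\Cw$ to be $\Z$-linearly independent inside the abelian group $\Cw$. Thus these squares already generate a free abelian subgroup of rank $\nc$ inside $\Cw$, giving $\rank(\Cw)\geq\nc$.

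For the upper bound, I would appeal to Proposition~\ref{pro-artin-to-adj}, which provides a surjection $A_W\twoheadrightarrow\adj(\cq)$ sending each Artin generator $\sigma_s$ to $e_s$ and commuting with the two projections onto $W$. The kernel of $A_W\twoheadrightarrow W$ is normally generated by $\{\sigma_s^2\mid s\in S\}$, because adjoining the relations $\sigma_s^2=1$ to the Artin presentation recovers exactly the Coxeter presentation~(\ref{pres-Coxeter-alt}) of $W$. Pushing this through the surjection, $\Cw$ is normally generated in $\adj(\cq)$ by $\{e_s^2\mid s\in S\}$. Since $\Cw$ is central by Lemma~\ref{pro-central-subgp}, normal generation coincides with ordinary generation; and by Lemma~\ref{pro-conj-same} one has $e_s^2=e_t^2$ whenever $s,t\in S$ are conjugate in $W$, so the generating set collapses to $\{e_s^2\mid s\in\rep\}$, of cardinality $\nc$. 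Consequently $\rank(\Cw)\leq\nc$, and combining with the lower bound yields $\rank(\Cw)=\nc$.

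The main obstacle lies not in this rank count but in the prerequisite Proposition~\ref{pro-artin-to-adj}: producing the surjection $A_W\twoheadrightarrow\adj(\cq)$ requires verifying that the braid relations $(e_se_t)_{m(s,t)}=(e_te_s)_{m(s,t)}$ hold in $\adj(\cq)$, which is a genuine computation inside the adjoint group rather than an immediate consequence of the defining relations. Once that is established, the argument above is essentially bookkeeping; and the same linear-independence observation used for the lower bound here should then upgrade Proposition~\ref{pro-rank} to the full Theorem~\ref{thm-main}, since the generating set $\{e_s^2\mid s\in\rep\}$ of $\Cw$ has already been shown to be $\Z$-linearly independent, hence forms a free $\Z$-basis.
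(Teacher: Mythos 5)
Your proof is correct, but it takes a genuinely different route from the paper's. The paper proves $\rank(\Cw)=\nc$ homologically: the central extension $1\to\Cw\to\adj(\cq)\to W\to 1$ gives a five-term exact sequence in rational homology, and since $H_*(W,\Q)$ is trivial for a Coxeter group, $H_1(\Cw,\Q)\cong H_1(\adj(\cq),\Q)$, whence $\rank(\Cw)=\rank(\adj(\cq)_\Ab)=\nc$ by Proposition~\ref{pro-abel-adj}. You instead sandwich the rank: your upper bound reproduces the generation argument that the paper defers to the proof of Theorem~\ref{thm-main} (surjectivity of $\straa|_{P_W}:P_W\to\Cw$, Proposition~\ref{pres-pure-Artin} --- which you essentially restate rather than cite --- centrality, and Lemma~\ref{pro-conj-same}), while your lower bound is an elementary observation the paper does not use at all, namely that the images $2[e_s]$ $(s\in\rep)$ of the $e_s^2$ under $\Ab_{\adj(\cq)}$ are $\Z$-independent in $\adj(\cq)_\Ab\cong\Z^{\nc}$, so the $e_s^2$ are independent in $\Cw$. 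The trade-off: the paper's argument needs the nontrivial input that Coxeter groups are rationally acyclic but requires no knowledge of generators of $\Cw$, and it keeps the rank count independent of the Artin-group material; your argument avoids homological algebra entirely but front-loads Propositions~\ref{pro-artin-to-adj} and~\ref{pres-pure-Artin}. Your closing remark is also correct and worth emphasizing: since a generating set that is $\Z$-linearly independent in an abelian group is automatically a free basis (the map $\Z^{\nc}\to\Cw$, $\epsilon_s\mapsto e_s^2$, is surjective and injective), your argument proves Theorem~\ref{thm-main} outright and renders Proposition~\ref{pro-rank} superfluous as a separate step.
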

\begin{proof}
The central extension $1\map \Cw\map\adj(\cq)\overset{\str}{\map} W\map 1$
yields the following exact sequence for the rational homology of groups:
\[
H_2(W,\Q)\map H_1(\Cw,\Q)_W\map H_1(\adj(\cq),\Q)\map H_1(W,\Q)
\]
(see Brown \cite{brown}*{Corollary VII.6.4}).
Here the co-invariants $H_1(\Cw,\Q)_W$ coincides with $H_1(\Cw,\Q)$ because
$\Cw$ is a central subgroup of $\adj(\cq)$.
It is known that the rational (co)homology of a Coxeter group is trivial
(see Akita \cite[Proposition 5.2]{a-euler} or Davis \cite{MR2360474}*{Theorem 15.1.1}).
As a result, we have an isomorphism
$H_1(\Cw,\Q)\cong H_1(\adj(\cq),\Q)$ and hence we have
\begin{align*}
\rank(\Cw)&=
\dim_\Q (\Cw\otimes\Q)=\dim_\Q H_1(\Cw,\Q)\\
&=\dim_\Q H_1(\adj(\cq),\Q)=\dim_\Q (\adj(\cq)_{\Ab}\otimes\Q)=\nc
\end{align*}
as desired.
\end{proof}

To proceed further, we need the notions of Artin groups and pure Artin groups.
Given a Coxeter system $(W,S)$, the \emph{Artin group} $A_W$ associated with
$(W,S)$ is the group defined by the presentation
\[
A_W:=\langle a_s\ (s\in S)\mid
(a_sa_t)_{m(s,t)}=(a_ta_s)_{m(s,t)}\ (s,t\in S, s\not=t, m(s,t)<\infty)\rangle.
\]
In view of (\ref{pres-Coxeter-alt}),
there is an obvious surjective homomorphism $\pi:A_W\map W$
defined by $a_s\mapsto s$ $(s\in S)$.
The \emph{pure Artin group} $P_W$ associated with $(W,S)$ is defined to be
the kernel of $\pi$ so that there is an extension
\[
1\map P_W\hookrightarrow A_W\overset{\pi}{\map}W\map 1.
\]
In case $W$ is the symmetric group on $n$ letters,
$A_W$ is the braid group of $n$ strands, and $P_W$ is the
pure braid group of $n$ strands.
Artin groups were introduced by Brieskorn-Saito \cite{MR0323910}.
Little is known about the structure of general Artin groups.
Among others, the following questions are still open.
\begin{enumerate}
\item Are Artin groups torsion free?
\item What is the center of Artin groups?
\item Do Artin groups have solvable word problem?
\item Are there finite $K(\pi,1)$-complexes for Artin groups?
\end{enumerate}
See survey articles by Paris \cite{MR2497781,MR3205598,MR3207280}
for further details of Artin groups.
\begin{pro}\label{pro-artin-to-adj}
The assignment $a_s\mapsto e_s$ $(s\in S)$ yields a well-defined surjective
homomorphism $\straa:A_W\map\adj(\cq)$.
\end{pro}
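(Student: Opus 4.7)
The plan is as follows. Since the Artin group $A_W$ is presented by the generators $a_s$ ($s\in S$) subject to the braid relations $(a_sa_t)_m=(a_ta_s)_m$ for $s\neq t$ with $m=m(s,t)<\infty$, the assignment $a_s\mapsto e_s$ extends to a well-defined homomorphism precisely when $(e_se_t)_m=(e_te_s)_m$ holds in $\adj(\cq)$ for every such pair. Granting this, surjectivity of $\straa$ is immediate from Proposition \ref{finite-generation}, since the $e_s$ already generate $\adj(\cq)$.

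To verify the braid relation I would first rewrite (\ref{adj-rel1}) in the convenient form
\[
e_x e_y = e_y e_{yxy}\qquad (x,y\in\cq),
\]
which is valid because every reflection is an involution in $W$. Using this identity to push the leftmost factor of $(e_se_t)_k$ successively to the right, one proves by induction on $k\ge 1$ the rewriting
\[
(e_se_t)_k = (e_te_s)_{k-1}\cdot e_{w_k},
\]
where $w_k\in\cq$ is the reflection defined by $w_1=s$, $w_{k+1}=t w_k t$ for $k$ odd, and $w_{k+1}=s w_k s$ for $k$ even. A direct induction identifies $w_k$ with the alternating word $(st)_{2k-1}$ when $k$ is odd and $(ts)_{2k-1}$ when $k$ is even; in both cases $w_k$ has length $2k-1$ as a word in $s,t$.

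Specializing to $k=m$ and invoking the Coxeter relation $(st)^m=1$, the reflection $w_m$ collapses onto a Coxeter generator: for $m$ odd, $w_m=(st)^{m-1}s=ts\cdot s=t$, while for $m$ even, $w_m=(ts)^{m-1}t=st\cdot t=s$. In either case $e_{w_m}$ is precisely the last letter needed to extend $(e_te_s)_{m-1}$ into $(e_te_s)_m$, since for $m$ odd the word $(e_te_s)_m$ ends with $e_t$ and for $m$ even it ends with $e_s$. Therefore $(e_se_t)_m=(e_te_s)_m$ in $\adj(\cq)$, which supplies the required well-definedness.

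The main obstacle is combinatorial rather than conceptual: the recursion for $w_k$ and the identity of the terminal letter of $(e_te_s)_m$ both depend on the parity of $m$, so the two cases must be tracked separately and reconciled at the end. Once the bookkeeping is set up correctly, the verification rests on only two ingredients: the defining quandle relation (\ref{adj-rel1}), used to carry out the inductive rewriting, and the single Coxeter relation $(st)^m=1$, invoked only at the final step to collapse $w_m$ to a generator.
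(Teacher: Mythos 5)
Your proposal is correct and follows essentially the same route as the paper: the paper likewise reduces well-definedness to the braid relation $(e_se_t)_{m}=(e_te_s)_{m}$, verifies it by repeatedly applying $e_xe_y=e_ye_{x\qdl y}$ to obtain $(e_te_s)_m=(e_se_t)_{m-1}e_x$ with $x$ the alternating conjugate of length $2m-1$, identifies $x$ with the appropriate terminal generator via the Coxeter relation, and gets surjectivity from Proposition \ref{finite-generation}. The only cosmetic differences are that you start from $(e_se_t)_m$ rather than $(e_te_s)_m$ and invoke $(st)^m=1$ directly where the paper uses the equivalent form $(st)_m=(ts)_m$.
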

\begin{proof}
As for the well-definedness, it suffices to show
$(e_s e_t)_{m(s,t)}=(e_t e_s)_{m(s,t)}$ for all distinct $s,t\in S$
with $m(s,t)<\infty$. 
Applying the relation $e_xe_y=e_ye_{x\qdl y}$ $(x,y\in\cq)$
repeatedly as in
\[
(e_t e_s)_{m(s,t)}=e_te_se_te_s\cdots 
=e_se_{t\qdl s}e_te_s\cdots
=e_se_te_{t\qdl s\qdl t}e_s\cdots=\cdots,
\]
we obtain
\[
(e_t e_s)_{m(s,t)}=(e_se_t)_{m(s,t)-1}e_x,
\]
where
\begin{equation*}
\begin{split}
x=\overbrace{t\qdl s\qdl t\qdl s\qdl\cdots}^{m(s,t)}
&=(st)_{m(s,t)-1}^{-1}t(st)_{m(s,t)-1}\\
&=(st)_{m(s,t)-1}^{-1}(ts)_{m(s,t)}\\
&=(st)_{m(s,t)-1}^{-1}(st)_{m(s,t)}.
\end{split}
\end{equation*}
In the last equality we used the relation $(st)_{m(s,t)}=(ts)_{m(s,t)}$.
It follows 
that $x$ is the last letter in $(st)_{m(s,t)}$, i.e.
$x=s$ or $x=t$ according as $m(s,t)$ is odd or $m(s,t)$ is even.
We conclude that $(e_se_t)_{m(s,t)-1}e_x=(e_se_t)_{m(s,t)}$
as desired.
Finally, the surjectivity follows from Proposition \ref{finite-generation}.
\end{proof}
As a result, the adjoint group $\adj(\cq)$ 
is an intermediate group between 
a Coxeter group $W$ and the corresponding Artin group $A_W$,
in the sense that the canonical surjection $\pi:A_W\twoheadrightarrow W$, $a_s\mapsto s$
$(s\in S)$
splits into a sequence of surjections
\[A_W\overset{\straa}{\twoheadrightarrow}\adj(\cq)\overset{\str}{\twoheadrightarrow} W,\
a_s\mapsto e_s\mapsto s\ (s\in S).\]

\begin{pro}\label{pres-pure-Artin}
$P_W$ is the normal closure of $\{a_s^2\mid s\in S\}$ in $A_W$.
In other words, $P_W$ is generated by $g^{-1}a_s^2g$ $(s\in S, g\in A_W)$.
\end{pro}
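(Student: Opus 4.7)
The plan is to compare two groups: $P_W$ and the normal closure $N$ of $\{a_s^2 \mid s\in S\}$ in $A_W$. The inclusion $N\subseteq P_W$ is immediate, since $\pi(a_s^2)=s^2=1$ puts each generator of $N$ into the normal subgroup $P_W$. The content lies in the reverse inclusion, which I would obtain by identifying the quotient $A_W/N$ with $W$.

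For the identification, I would compute a presentation of $A_W/N$ by adjoining the relations $a_s^2=1$ ($s\in S$) to the given presentation of $A_W$. The defining relations of $A_W$ are exactly the braid relations $(a_sa_t)_{m(s,t)}=(a_ta_s)_{m(s,t)}$, so $A_W/N$ admits the presentation
\[
\langle a_s\ (s\in S)\mid a_s^2=1\ (s\in S),\ (a_sa_t)_{m(s,t)}=(a_ta_s)_{m(s,t)}\ (s,t\in S, s\neq t, m(s,t)<\infty)\rangle.
\]
By the equivalent form (\ref{pres-Coxeter-alt}) of the Coxeter relations, this is precisely a presentation of $W$, so the universal property of $W$ yields a homomorphism $\iota:W\to A_W/N$ sending $s\mapsto \overline{a_s}$.

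On the other hand, $\pi:A_W\twoheadrightarrow W$ descends through the quotient $A_W\twoheadrightarrow A_W/N$ to a surjection $\bar\pi:A_W/N\twoheadrightarrow W$ since $N\subseteq P_W=\ker\pi$. The composition $\bar\pi\circ\iota$ is the identity on generators of $W$, hence the identity on $W$; therefore $\bar\pi$ is an isomorphism. Its kernel is trivial, so $P_W/N=\ker\bar\pi=1$, giving $P_W=N$.

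The only mildly delicate step is checking that adding the relations $a_s^2=1$ to the Artin presentation really does produce the Coxeter presentation of $W$, but this is exactly the content of the reformulation (\ref{pres-Coxeter-alt}) already recorded in the paper, so no further work is needed. Everything else is a routine application of the universal property of a group presentation.
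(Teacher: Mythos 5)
Your proof is correct and is essentially the paper's argument in different packaging: the paper applies the third isomorphism theorem to the normal closures $N(R)$, $N(Q)$ of the relator sets in the free group $F(S)$, whereas you adjoin the relators $a_s^2$ to the Artin presentation and invoke the universal property, but both proofs hinge on exactly the same key fact, namely the equivalence of presentations recorded in (\ref{pres-Coxeter-alt}). The only step you leave implicit is that $\iota$ is surjective (the classes $\overline{a_s}$ generate $A_W/N$), which is what upgrades the identity $\bar\pi\circ\iota=\mathrm{id}_W$ to the conclusion that $\bar\pi$ is an isomorphism rather than merely a split surjection.
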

\begin{proof}
Given a Coxeter system $(W,S)$,
let $F(S)$ be the free group on $S$ and put
\[
R:=\{(st)_{m(s,t)}(ts)_{m(s,t)}^{-1}\mid s,t\in S, s\not= t, m(s,t)<\infty\},\
Q:=\{s^2\mid s\in S\}.
\]
Let $N(R),N(Q),N(R\cup S)$ be the normal closure of $R,Q,R\cup Q$ in $F(S)$, respectively.
The third isomorphism theorem yields a short exact sequence of groups
\[
1\map \frac{N(R)N(Q)}{N(R)}\map \frac{F(S)}{N(R)}\overset{p}{\map}
\frac{F(S)}{N(R)N(Q)}\map 1.
\]
Observe that the left term $N(R)N(Q)/N(R)$ is nothing but the normal closure
of $Q$ in $F(S)/N(R)$.
Now $F(S)/N(R)N(Q)=F(S)/N(R\cup Q)=W$ by the definition of $W$ and
$F(S)/N(R)$ is identified with $A_W$ via $s\mapsto a_s$ $(s\in S)$.
Under this identification, the map $p$ is  the canonical
surjection $\pi:A_W\twoheadrightarrow W$ and hence the left term
$N(R)N(Q)/N(R)$ coincides with $P_W$. 
The proposition follows.
\end{proof}
\begin{rem}
Digne-Gomi \cite{MR1831679}*{Corollary 6} obtained a presentation of $P_W$
by using Reidemeister-Schreier method.
Their presentation is infinite whenever $W$ is infinite.
Although Proposition \ref{pres-pure-Artin} may be read off from their presentation,
we wrote down the proof because our proof is much simpler than the arguments in
\cite{MR1831679}.
\end{rem}

\begin{proof}[Proof of Theorem \ref{thm-main}]
Consider the commutative diagram
\[\begin{tikzcd}
1\arrow{r}  & P_W \arrow[r,hook] \arrow[d,"\straa|_{P_W}"]
               &A_W \arrow[d,"\straa"]\arrow[r,"\pi"]&W\arrow[r]\arrow[d,equal]&1\\
1\arrow[r]& \Cw \arrow[r,hook] &\adj(\cq)\arrow[r,"\str"]&W\arrow[r]&1
\end{tikzcd}\]
whose rows are exact. 
Since $\straa:A_W\map\adj(\cq)$ is surjective,
one can check 
that its restriction $\straa|_{P_W}:P_W\map \Cw$ is also surjective.
As $P_W$ is generated by $g^{-1}a_s^2g$ $(s\in S,g\in A_W)$
by Proposition \ref{pres-pure-Artin}, $\Cw$ is generated by
\[
\straa(g^{-1}a_s^2g)=\straa(g)^{-1}e_s^2\straa(g)=e_s^2\ (s\in S, g\in A_W),
\]
where the last equality follows from the fact that
$\Cw$ is central by Lemma \ref{pro-central-subgp}.
Combining with Lemma \ref{pro-conj-same},
we see that $\Cw$ is generated by $\{e_s^2\mid s\in\rep\}$.
Now $\rank (\Cw)=\nc$ by Proposition \ref{pro-rank} and $|\rep|=\nc$
by the defintion of $\rep$,
$\Cw$ must be a free abelian group of rank $\nc$
and $\{e_s^2\mid s\in\rep\}$
must be a basis of $\Cw$.
\end{proof}
As an immediate cosequence of Theorem \ref{thm-main},
we can determine the rational cohomology ring of $\adj(\cq)$:
\begin{cor}\label{rational-cohomology}
For any Coxeter system $(W,S)$, the inclusion $\Cw\hookrightarrow\adj(\cq)$
induces an isomorphism
\[H^*(\adj(\cq),\Q)\overset{\cong}{\map} H^*(\Cw,\Q){\cong}\wedge_\Q(u_1,u_2,\dots,u_{\nc})\]
with $\deg u_k=1$ $(1\leq k\leq \nc)$.
\end{cor}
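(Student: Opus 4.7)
The plan is to exploit the central extension
\[
0\map \Cw\map \adj(\cq)\overset{\str}{\map} W\map 1
\]
provided by Theorem~\ref{thm-main} and run the associated Lyndon--Hochschild--Serre spectral sequence in rational cohomology. Because $\Cw$ is central in $\adj(\cq)$, the group $W$ acts trivially on $\Cw$ and hence on $H^*(\Cw,\Q)$, so the $E_2$-page factors as
\[
E_2^{p,q}=H^p(W,H^q(\Cw,\Q))\cong H^p(W,\Q)\otimes_\Q H^q(\Cw,\Q).
\]

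Next I would invoke the vanishing of the reduced rational (co)homology of any Coxeter group, the same input already used in the proof of Proposition~\ref{pro-rank} (Akita \cite{a-euler}, Davis \cite{MR2360474}). Thus $H^p(W,\Q)=0$ for $p>0$, and only the column $p=0$ survives. The spectral sequence collapses at $E_2$ for formal degree reasons, giving
\[
H^n(\adj(\cq),\Q)\cong E_\infty^{0,n}=E_2^{0,n}\cong H^n(\Cw,\Q)
\]
for every $n$. This isomorphism is precisely the edge homomorphism along the fiber, which is the restriction map induced by the inclusion $\Cw\hookrightarrow\adj(\cq)$.

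Finally, by Theorem~\ref{thm-main} we have $\Cw\cong\Z^{\nc}$, and the rational cohomology of a free abelian group of rank $\nc$ is the exterior $\Q$-algebra on $\nc$ generators of degree $1$, yielding
\[
H^*(\Cw,\Q)\cong\wedge_\Q(u_1,\dots,u_{\nc}),\quad \deg u_k=1.
\]
There is no serious obstacle: the argument is essentially a one-line application of the spectral sequence, and the only nontrivial ingredients, the rational acyclicity of $W$ and the identification $\Cw\cong\Z^{\nc}$, are already in hand. The only point to be mildly careful about is verifying that the ring isomorphism, and not merely an additive isomorphism, is induced by the inclusion; this is automatic since the edge homomorphism is multiplicative.
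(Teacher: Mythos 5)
Your argument is correct and is essentially identical to the paper's proof: both run the Lyndon--Hochschild--Serre spectral sequence of the central extension $1\map\Cw\map\adj(\cq)\map W\map 1$, use the rational acyclicity of $W$ to collapse everything onto the $p=0$ column, and identify $H^*(\Cw,\Q)$ with the exterior algebra via $\Cw\cong\Z^{\nc}$. Your added remark that the edge homomorphism is the restriction along the inclusion and is multiplicative is a welcome precision that the paper leaves implicit.
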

\begin{proof}
In the Lyndon-Hochschild-Serre spectral sequence
\[
E_2^{pq}=H^p(W,H^q(\Cw,\Q))\Rightarrow H^{p+q}(\adj(\cq),\Q)
\]
associated with the central extension $1\map\Cw\map\adj(\cq)\map W\map 1$,
one has
\[
E_2^{pq}\cong H^p(W,\Q)\otimes_\Q H^q(\Cw,\Q)\cong
\begin{cases}
H^q(\Cw,\Q) & p=0 \\ 0 & p\not=0
\end{cases}\] 
because $H^p(W,\Q)=0$ for $p\not=0$ (see the proof of Proposition \ref{pro-rank}).
The first isomorphism follows immediately.
The second isomorphism follows from the fact $\Cw\cong\Z^{\nc}$.
\end{proof}
\begin{rem}
For any right-angled Coxeter group $W$,
Kishimoto \cite{arXiv:1706.06209}*{Theorem 5.3} determined the mod 2 cohomology ring
of $\adj(\cq)$.
\end{rem}

\section{Construction of $2$-cocycles}\label{sec:cocycle}
Throughout this section, we assume that the reader is familiar with group cohomology
and Coxeter groups.
The central extension 
\begin{equation}\label{cent_ext}
1\map \Cw\map\adj(\cq)\overset{\str}{\map}W\map 1
\end{equation}
corresponds to the cohomology class $u_\str\in H^2(W,\Cw)$
(see Brown \cite{brown}*{\S IV.3} for precise).
In this section, we will construct 2-cocycles representing $u_\str$.
Before doing so,
we claim $u_\str\not=0$.
The claim is equivalent to the following lemma.
\begin{lem}\label{lem:ext-nontriv}
The central extension $1\map \Cw\map\adj(\cq)\overset{\str}{\map}W\map 1$ is nontrivial.
\end{lem}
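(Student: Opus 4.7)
The plan is to show nontriviality by comparing abelianizations. Suppose for contradiction that the extension splits, so that $\adj(\cq)\cong \Cw\times W$ as groups. Then
\[
\adj(\cq)_\Ab\cong \Cw\times W_\Ab.
\]
By Theorem \ref{thm-main} we have $\Cw\cong\Z^{\nc}$, and by Proposition \ref{pro-coxeter-abel} we have $W_\Ab\cong(\Z/2)^{\nc}$. Hence a split extension would force $\adj(\cq)_\Ab\cong \Z^{\nc}\oplus(\Z/2)^{\nc}$, a group with nontrivial $2$-torsion.

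On the other hand, Proposition \ref{pro-abel-adj} computes $\adj(\cq)_\Ab$ directly as the free abelian group $\Z^{\nc}$, which is torsion-free. Provided $\nc\geq 1$ (which holds whenever $W$ is nontrivial, since $S\neq\emptyset$), the existence of $2$-torsion in one group and its absence in the other yields a contradiction.

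So the main (and essentially only) step is the abelianization comparison; no real obstacle is expected, since both sides have already been computed earlier in the paper. The one thing to spell out carefully is that a \emph{trivial} central extension with central kernel means that $\adj(\cq)$ splits as the direct product $\Cw\times W$ (the section is automatically a homomorphism and commutes with $\Cw$ because $\Cw$ is central by Lemma \ref{pro-central-subgp}), so the abelianization does distribute over the factors as claimed.
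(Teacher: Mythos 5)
Your argument is correct and is essentially the paper's own proof: both assume triviality, deduce $\adj(\cq)\cong \Cw\times W$, and derive a contradiction by comparing $\adj(\cq)_\Ab\cong\Z^{\nc}$ (Proposition \ref{pro-abel-adj}) with $(\Cw\times W)_\Ab\cong\Z^{\nc}\times(\Z/2)^{\nc}$ (Proposition \ref{pro-coxeter-abel}). Your additional remark that a trivial central extension forces a direct product decomposition (because $\Cw$ is central) is a correct and worthwhile clarification that the paper leaves implicit.
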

\begin{proof}
If the central extension is trivial, then $\adj(\cq)\cong \Cw\times W$.
But this is not the case because $\adj(\cq)_\Ab\cong\Z^{\nc}$ 
by Proposition \ref{pro-abel-adj} while
$(\Cw\times W)_\Ab\cong \Cw\times W_\Ab\cong
\Z^{\nc}\times (\Z/2)^{\nc}$ by Proposition \ref{pro-coxeter-abel}.
\end{proof}
Now we invoke the celebrated Matsumoto's theorem:
\begin{thm}[Matsumoto \cite{MR0183818}]\label{thm-matsu}
Let $(W,S)$ be a Coxeter system, $M$ a monoid and $f:S\map M$
a map such that $(f(s)f(t))_{m(s,t)}=(f(t)f(s))_{m(s,t)}$ for all
$s,t\in S$ with $s\not=t$, $m(s,t)<\infty$.
Then there exists a unique map $F:W\map M$ such that
$F(w)=f(s_1)\cdots f(s_k)$ whenever $w=s_1\cdots s_k$ $(s_i\in S)$
is a reduced expression.
\end{thm}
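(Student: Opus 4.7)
The plan is to prove Matsumoto's theorem by induction on the length $\ell(w)$ of $w\in W$. Uniqueness of $F$ is automatic: every $w$ admits at least one reduced expression, so the prescribed formula pins down $F(w)\in M$. What requires work is well-definedness, namely that the product $f(s_1)\cdots f(s_k)\in M$ depends only on $w$ and not on the chosen reduced expression. Taking $F(1):=1_M$ as the base case, I would prove by induction on $k$ the following statement: if $\ell(w)=k\geq 1$ and $w=s_1\cdots s_k=t_1\cdots t_k$ are two reduced expressions, then $f(s_1)\cdots f(s_k)=f(t_1)\cdots f(t_k)$ in $M$.

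The inductive step splits into two cases. If $s_1=t_1$, then $s_1w$ has length $k-1$ with reduced expressions $s_2\cdots s_k$ and $t_2\cdots t_k$; the inductive hypothesis applied to $s_1w$ gives $f(s_2)\cdots f(s_k)=f(t_2)\cdots f(t_k)$, and left-multiplying by $f(s_1)=f(t_1)$ yields the equality. If $s:=s_1\neq t_1=:t$, I would invoke the following combinatorial lemma from Coxeter theory: since $\ell(sw)<\ell(w)$ and $\ell(tw)<\ell(w)$, the order $m:=m(s,t)$ must be finite, and there exists $u\in W$ with $\ell(u)=k-m$ such that both $(st)_m\cdot u$ and $(ts)_m\cdot u$ give reduced expressions for $w$. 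Fix a reduced expression $u_1\cdots u_{k-m}$ of $u$. Then $s_1\cdots s_k$ and the alternating expression beginning with $s$ share the first letter $s$; stripping it produces two reduced expressions for $sw$ of length $k-1$, and the inductive hypothesis yields $f(s_1)\cdots f(s_k)=(f(s)f(t))_m\cdot f(u_1)\cdots f(u_{k-m})$. Symmetrically, $f(t_1)\cdots f(t_k)=(f(t)f(s))_m\cdot f(u_1)\cdots f(u_{k-m})$. The hypothesis $(f(s)f(t))_m=(f(t)f(s))_m$ then closes the argument.

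The main obstacle is the combinatorial lemma invoked above: whenever both simple reflections $s$ and $t$ shorten $w$ on the left, the exponent $m(s,t)$ is finite and $w$ admits reduced expressions with matching prefixes $(st)_{m(s,t)}$ and $(ts)_{m(s,t)}$ followed by a common tail. This is the substantive input; it is typically established either by iterated applications of the strong exchange condition focused on the pair $\{s,t\}$, or more conceptually through the geometric representation of $W$, by examining the rank-two root subsystem spanned by $\alpha_s$ and $\alpha_t$ and using the fact that both simple roots lie in the inversion set of $w$. With this lemma in hand, the remainder of the argument is a routine induction; crucially, no element of $M$ is ever inverted, so the proof goes through in the monoid setting as stated.
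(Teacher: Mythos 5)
This theorem is not proved in the paper at all: it is quoted as Matsumoto's theorem, with the proof deferred to Matsumoto's note, Bourbaki (Ch.~IV, \S1, Proposition 5), and Geck--Pfeiffer (Theorem 1.2.2). Your argument is precisely the standard proof found in those references --- induction on $\ell(w)$, splitting on whether the two reduced expressions share their first letter, and in the divergent case invoking the fact that if two distinct $s,t\in S$ both shorten $w$ on the left then $m(s,t)<\infty$ and $w=(st)_{m(s,t)}u=(ts)_{m(s,t)}u$ with $\ell(w)=m(s,t)+\ell(u)$ --- and the induction is carried out correctly, with no inverses used so that the monoid setting is respected. The one substantive ingredient you do not prove is exactly that combinatorial lemma; you correctly isolate it and point to standard derivations (it follows, for instance, from the parabolic coset decomposition $w=xu$ with $x\in W_{\{s,t\}}$, $u$ of minimal length in $W_{\{s,t\}}w$, and the observation that the only element of a dihedral group shortened by both generators is the longest element, which exists only when $m(s,t)<\infty$). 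So the proposal is a correct sketch of the standard proof, matching the cited references rather than anything in the paper itself.
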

The proof also can be found in 
\cite{bourbaki}*{Chapitre~IV, \S1, Proposition 5} and 
\cite{MR1778802}*{Theorem 1.2.2}.
Define a map $f:S\map\adj(\cq)$ by $s\mapsto e_s$, then $f$ satisfies
the assumption of Theorem \ref{thm-matsu} as in the proof of
Proposition \ref{pro-artin-to-adj}, 
and hence there exists a
unique map $F:W\map\adj(\cq)$ such that 
$F(w)=f(s_1)\cdots f(s_k)=e_{s_1}\cdots e_{s_k}$ whenever
$w=s_1\cdots s_k$ $(s_i\in S)$ is a reduced expression.
It is clear that $F:W\map\adj(\cq)$ is a set-theoretical section of
$\str:\adj(\cq)\map W$.
Define $c:W\times W\map \Cw$ by 
\[c(w_1,w_2)=F(w_1)F(w_2)F(w_1w_2)^{-1}.\]
The standard argument in group cohomology 
(see Brown \cite{brown}*{\S IV.3})
implies the following result:
\begin{pro}\label{thm:cocyle1}
$c$ is a normalized $2$-cocycle and $[c]=u_\str\in H^2(W,\Cw)$.
\end{pro}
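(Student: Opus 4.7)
The plan is to invoke the standard dictionary between central extensions and normalized 2-cocycles (as in Brown, \S IV.3), with $F$ serving as the required set-theoretic section of $\str$. There is essentially nothing to compute beyond checking that $F$ behaves well on the identity and that $c$ lands in $\Cw$; everything else is formal.

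First I would verify that $F$ is a set-theoretic section of $\str$. If $w=s_1\cdots s_k$ is a reduced expression, then $F(w)=e_{s_1}\cdots e_{s_k}$ and $\str(F(w))=s_1\cdots s_k=w$. Since the empty word is a reduced expression for $1\in W$, we have $F(1)=1\in\adj(\cq)$ (the empty product). Next I would check that $c$ takes values in $\Cw$: applying $\str$ to $c(w_1,w_2)=F(w_1)F(w_2)F(w_1w_2)^{-1}$ gives $w_1w_2(w_1w_2)^{-1}=1$, so $c(w_1,w_2)\in\ker\str=\Cw$. Normalization $c(1,w)=c(w,1)=1$ is then immediate from $F(1)=1$.

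The cocycle identity to verify, noting that the conjugation action of $W$ on $\Cw$ is trivial by Lemma \ref{pro-central-subgp}, is
\[
c(w_1,w_2)\,c(w_1w_2,w_3)=c(w_2,w_3)\,c(w_1,w_2w_3).
\]
Expanding the left-hand side yields $F(w_1)F(w_2)F(w_3)F(w_1w_2w_3)^{-1}$ after the $F(w_1w_2)^{-1}F(w_1w_2)$ cancellation. For the right-hand side, the factor $c(w_2,w_3)=F(w_2)F(w_3)F(w_2w_3)^{-1}$ lies in the central subgroup $\Cw$, so it commutes with $F(w_1)$; rearranging and cancelling $F(w_2w_3)^{-1}F(w_2w_3)$ gives the same expression. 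Hence $c$ is a normalized 2-cocycle.

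Finally, the identification $[c]=u_\str\in H^2(W,\Cw)$ is precisely the construction recalled in Brown, \S IV.3: given any central extension with a normalized set-theoretic section, the associated cocycle represents the classifying cohomology class. There is no genuine obstacle here; the only thing the author is really asking us to check is that the section $F$ provided by Matsumoto's theorem is normalized and that $c$ lands in the kernel, both of which are immediate.
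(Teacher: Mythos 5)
Your proposal is correct and matches the paper, which offers no proof beyond citing the ``standard argument in group cohomology'' from Brown \S IV.3; you have simply written out that standard argument (section normalized at the identity, $c$ lands in $\ker\str=\Cw$, cocycle identity via centrality of $\Cw$), all of which checks out.
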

\begin{rem}
In case $W$ is the symmetric group of $n$ letters $\mathfrak{S}_n$,
Proposition \ref{thm:cocyle1} was stated in
\cite{MR2799090}*{Remark 3.3}.
\end{rem}
Now we deal with the case $\nc=1$ more precisely.
If $c(W)=1$ then the odd subgraph $\Gamma_{odd}$ of $W$ is connected and
hence $W$ must be irreducible.
All finite irreducible Coxeter groups of type other than
$B_n$ $(n\geq 2)$, $F_4$, $I_2(m)$ ($m$ even) satisfy $c(W)=1$.
Among affine irreducible Coxeter groups, 
those of type $\widetilde{A}_n$ $(n\geq 2)$, $\widetilde{D}_n$ $(n\geq 4)$, 
$\widetilde{E}_6$, $\widetilde{E}_7$ and $\widetilde{E}_8$ 
fulfill $c(W)=1$.
For simplifying the notation, we will identify $\Cw$ with $\Z$ by $e_s^2\mapsto 1$
and denote our central extension by
\[
0\map\Z\map\adj(\cq)\overset{\str}{\map} W\map 1.
\]
\begin{pro}\label{pro-second-coho-conn}
If $\nc=1$ then $H^2(W,\Z)\cong\Z/2$.
\end{pro}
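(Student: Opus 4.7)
The plan is to compute $H^2(W, \Z)$ directly via the universal coefficient theorem for cohomology, which for any group $W$ furnishes a (naturally split) short exact sequence
\[
0 \to \operatorname{Ext}^1_{\Z}(H_1(W, \Z), \Z) \to H^2(W, \Z) \to \Hom_{\Z}(H_2(W, \Z), \Z) \to 0.
\]
Since $H_1(W, \Z) = W_{\Ab}$, Proposition \ref{pro-coxeter-abel} together with the standing hypothesis $\nc = 1$ gives $H_1(W, \Z) \cong \Z/2$, whence $\operatorname{Ext}^1_{\Z}(H_1(W, \Z), \Z) \cong \Z/2$.

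For the right-hand term, I would invoke the vanishing of rational (co)homology of Coxeter groups in positive degrees, already used in the proof of Proposition \ref{pro-rank}. This implies $H_2(W, \Z) \otimes \Q = 0$, so $H_2(W, \Z)$ is a torsion abelian group, and therefore $\Hom_{\Z}(H_2(W, \Z), \Z) = 0$. Substituting into the sequence above immediately yields $H^2(W, \Z) \cong \operatorname{Ext}^1_{\Z}(\Z/2, \Z) \cong \Z/2$, which is the desired conclusion.

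Once those two ingredients are in place the computation is purely mechanical; the main ``work'' is correctly identifying $W_{\Ab}$ and recalling that rational homology vanishes in positive degrees, both of which are already established in the paper. Consequently there is no real obstacle, and this computation dovetails with Lemma \ref{lem:ext-nontriv} to pin down the central extension (\ref{cent_ext}) as a representative of the unique nonzero class in $H^2(W, \Z)$ when $\nc = 1$.
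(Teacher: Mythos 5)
Your proof is correct, but it takes a genuinely different route from the paper. The paper computes $H^2(W,\Z)$ via the exponential sequence $0\to\Z\to\C\to\C^\times\to 1$: the vanishing of $H^k(W,\C)$ for $k>0$ forces the connecting map $\delta\colon H^1(W,\C^\times)\to H^2(W,\Z)$ to be an isomorphism, and then $\Hom(W,\C^\times)\cong\Z/2$ is computed directly from the fact that $W$ is generated by mutually conjugate involutions. You instead use the universal coefficient theorem, identifying the $\operatorname{Ext}$ term as $\operatorname{Ext}^1_\Z(W_\Ab,\Z)\cong\Z/2$ via Proposition \ref{pro-coxeter-abel}, and killing the $\Hom$ term because $H_2(W,\Z)$ is torsion (which does follow from $H_2(W,\Q)=0$, since $\tor(H_1(W,\Z),\Q)=0$, and a torsion group admits no nonzero map to $\Z$ even without finite generation). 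Both arguments hinge on the same two inputs --- vanishing of rational (co)homology in positive degrees and the structure of $W_\Ab$ --- and are in essence two faces of the Bockstein, since $\Hom(W_\Ab,\C^\times)\cong\operatorname{Ext}^1_\Z(W_\Ab,\Z)$ for torsion $W_\Ab$. The paper's version has the advantage of exhibiting the generator concretely as $\delta\rho$ for the sign character $\rho$, which is then exploited to write down the explicit cocycle $\tau_\rho$ in Theorem \ref{thm:cocycle2}; your version is more mechanical and immediately yields the general statement $H^2(W,\Z)\cong(\Z/2)^{\nc}$ for arbitrary $\nc$, but by itself it does not produce the explicit representing cocycle needed later.
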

\begin{proof}
A short exact sequence $0\map\Z\hookrightarrow\C\map\C^\times\map 1$ of
abelian groups, where $\C\map\C^\times$ is defined by $z\mapsto\exp(2\pi\sqrt{-1}z)$,
induces the exact sequence
\[
H^1(W,\C)\map H^1(W,\C^\times)\overset{\delta}{\map} H^2(W,\Z)\map H^2(W,\C)
\]
(see Brown \cite{brown}*{Proposition III.6.1}).
It is known that $H^k(W,\C)=0$ for $k>0$ (see the proof of Proposition \ref{pro-rank}),
which implies that the connecting homomorphism
$\delta:H^1(W,\C^\times)\map H^2(W,\Z)$ is an isomorphism.
We claim that $H^1(W,\C^\times)=\Hom(W,\C^\times)\cong\Z/2$.
Indeed, $W$ is generated by $S$ consisting of elements of order $2$,
and all elements of $S$ are mutually conjugate by the assumption $\nc=1$.
Thus $\Hom(W,\C^\times)$ consists of the trivial homomorphism
and the homomorphism $\rho$ defined by $\rho(s)=-1$ $(s\in S)$.
\end{proof}
Combining Lemma \ref{lem:ext-nontriv} and Proposition \ref{pro-second-coho-conn},
we obtain the following corollary.
\begin{cor}\label{cor:unique-cent}
If $\nc=1$ then $0\map\Z\map\adj(\cq)\overset{\str}{\map} W\map 1$
is the unique nontrivial central extension of $W$ by $\Z$.
\end{cor}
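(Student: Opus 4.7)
The plan is to invoke the classical classification of central extensions: equivalence classes of central extensions of $W$ by an abelian group $A$ (with trivial $W$-action, which is automatic for central extensions by $\Z$) are in bijection with elements of $H^2(W,A)$, under which the zero class corresponds to the split extension $A\times W$ and nonzero classes correspond to nontrivial extensions. See Brown \cite{brown}*{\S IV.3}.

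The argument is then a one-line synthesis of two facts already in hand. First, Proposition \ref{pro-second-coho-conn} tells us that when $\nc=1$ we have $H^2(W,\Z)\cong\Z/2$, so up to equivalence there are exactly two central extensions of $W$ by $\Z$: the split extension, and one nontrivial extension. Second, Lemma \ref{lem:ext-nontriv} (applied in the present setting, where $\Cw$ has been identified with $\Z$) asserts that our extension
\[
0\map\Z\map\adj(\cq)\overset{\str}{\map} W\map 1
\]
is not split, i.e., its class $u_\str\in H^2(W,\Z)$ is nonzero.

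Since the unique nonzero element of $\Z/2$ corresponds to the unique nontrivial equivalence class of central extensions, the extension above must represent it. This identifies it, up to equivalence, as the unique nontrivial central extension of $W$ by $\Z$. There is no real obstacle; the substantive content has already been deposited in Lemma \ref{lem:ext-nontriv} and Proposition \ref{pro-second-coho-conn}, and the corollary is simply the formal combination of the two.
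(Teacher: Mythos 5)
Your proposal is correct and is exactly the paper's argument: the paper derives the corollary by combining Lemma \ref{lem:ext-nontriv} (the extension is nonsplit, so $u_\str\neq 0$) with Proposition \ref{pro-second-coho-conn} ($H^2(W,\Z)\cong\Z/2$ when $\nc=1$), using the standard bijection between $H^2(W,\Z)$ and equivalence classes of central extensions. Nothing to add.
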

In general, given a homomorphism of groups $f:G\map\C^\times$, 
the cohomology class $\delta\! f\in H^2(G,\Z)$,
where $\delta:\Hom(G,\C^\times)\map H^2(G,\Z)$ is the connecting
homomorphism as above, can be described
as follows. For each $g\in G$, choose a branch of $\log(f(g))$.
We argue such that $\log(f(1))=0$.
Define $\tau_f:G\times G\map\Z$ by
\begin{equation}\label{eq-def-cocycle-formula}
\tau_f(g,h)=\frac{1}{2\pi\sqrt{-1}}
\{\log(f(g))+\log(f(h))-\log(f(gh))\}\in\Z.
\end{equation}
By a diagram chase, one can prove the following:
\begin{pro}\label{pro-cocycle-log}
$\tau_f$ is a normalized $2$-cocycle and $[\tau_f]=\delta\! f
\in H^2(G,\Z)$.
\end{pro}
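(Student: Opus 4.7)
The plan is to verify the three assertions in turn: that $\tau_f$ takes values in $\Z$, that it is a normalized $2$-cocycle, and that its class in $H^2(G,\Z)$ coincides with $\delta\! f$.

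First, integrality and normalization. Since $f$ is a homomorphism, $f(g)f(h)f(gh)^{-1}=1$, so
\[
\exp\!\bigl(\log f(g)+\log f(h)-\log f(gh)\bigr)=1,
\]
and the bracketed expression lies in $2\pi\sqrt{-1}\,\Z$; dividing by $2\pi\sqrt{-1}$ yields $\tau_f(g,h)\in\Z$. The convention $\log f(1)=0$ immediately gives $\tau_f(1,h)=\tau_f(g,1)=0$.

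Next, the cocycle identity. With trivial $G$-action the coboundary of a $2$-cochain $c$ is $(dc)(g,h,k)=c(h,k)-c(gh,k)+c(g,hk)-c(g,h)$. Substituting the definition of $\tau_f$ and writing $L(x):=\log f(x)$, the four summands contribute $L(h)+L(k)-L(hk)$, $-L(gh)-L(k)+L(ghk)$, $L(g)+L(hk)-L(ghk)$, $-L(g)-L(h)+L(gh)$ (all scaled by $1/(2\pi\sqrt{-1})$), which cancel in pairs; hence $d\tau_f=0$.

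For the identification $[\tau_f]=\delta\! f$, I would unpack the snake-lemma definition of the connecting homomorphism attached to the coefficient sequence $0\to\Z\to\C\to\C^\times\to 1$. View $f\in Z^1(G,\C^\times)=\Hom(G,\C^\times)$ and lift it pointwise to the $1$-cochain $\widetilde{f}:G\to\C$ defined by $\widetilde{f}(g):=\tfrac{1}{2\pi\sqrt{-1}}\log f(g)$; because $f$ is already a cocycle downstairs, the coboundary $d\widetilde{f}$ lands in $\Z\subset\C$, and by construction $\delta\! f=[d\widetilde{f}]\in H^2(G,\Z)$. A direct expansion gives $(d\widetilde{f})(g,h)=\widetilde{f}(g)+\widetilde{f}(h)-\widetilde{f}(gh)=\tau_f(g,h)$, which completes the argument. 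I do not anticipate a genuine obstacle, since the proof is essentially a definition chase; the only bookkeeping point is that changing the branch of $\log f(g)$ for some $g$ alters $\widetilde{f}$ by a $\Z$-valued $1$-cochain, whose coboundary is a $2$-coboundary, so $[\tau_f]$ is independent of those choices, in agreement with the well-definedness of $\delta\! f$.
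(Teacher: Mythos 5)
Your proof is correct and follows exactly the route the paper intends: the paper dispatches this proposition with the single phrase ``by a diagram chase,'' and your argument is precisely that chase written out — lifting $f$ to the $\C$-valued cochain $\widetilde{f}=\tfrac{1}{2\pi\sqrt{-1}}\log f$, observing that $d\widetilde{f}$ is the $\Z$-valued cocycle $\tau_f$ representing $\delta f$ by the definition of the connecting homomorphism, and checking normalization and branch-independence. No discrepancies to report.
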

Assuming $\nc=1$, let $\rho:W\map\C^\times$ be the homomorphism defined by
$\rho(s)=-1$ $(s\in S)$ as in the proof of Proposition \ref{pro-second-coho-conn}.
Note that $\rho(w)=(-1)^{\ell(w)}$ $(w\in W)$ where $\ell(w)$ is the
\emph{length} of $w$ (see Humphreys \cite{humphreys}*{\S5.2}).
For each $w\in W$, choose a branch of $\log(\rho(w))$ as
\[
\log(\rho(w))=\begin{cases}
0 & \text{if $\ell(w)$ is even,} \\
\pi\sqrt{-1} & \text{if $\ell(w)$ is odd.}
\end{cases}
\]
Applying (\ref{eq-def-cocycle-formula}),
the corresponding map
$\tau_\rho:W\times W\map\Z$ is given by
\begin{equation}\label{eq-cocycle-coxeter-conn}
\tau_\rho(w_1,w_2)=\begin{cases}
1 & \text{if $\ell(w_1)$ and $\ell(w_2)$ are odd,} \\
0 & \text{otherwise.}
\end{cases}
\end{equation}

Combining Lemma \ref{lem:ext-nontriv}, Corollary \ref{cor:unique-cent}, and
Proposition \ref{pro-cocycle-log},  we obtain the following theorem:
\begin{thm}\label{thm:cocycle2}
If $\nc=1$ then
$\tau_\rho:W\times W\map\Z$ defined by
$(\ref{eq-cocycle-coxeter-conn})$
is a normalized $2$-cocycle and $[\tau_\rho]=u_\str\in H^2(W,\Z)$.
\end{thm}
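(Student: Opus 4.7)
My plan is to assemble the theorem directly from the three preceding results: Proposition \ref{pro-cocycle-log}, the proof of Proposition \ref{pro-second-coho-conn}, and Corollary \ref{cor:unique-cent}. The work has essentially already been done; what remains is to identify $[\tau_\rho]$ with $u_\str$ inside the two-element group $H^2(W,\Z)\cong\Z/2$.

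First I would verify that the closed form (\ref{eq-cocycle-coxeter-conn}) for $\tau_\rho$ is indeed what the general formula (\ref{eq-def-cocycle-formula}) produces with the chosen branches of logarithm. This is a three-case check, based on the parities of $\ell(w_1)$ and $\ell(w_2)$ and the fact that $\rho$ is a homomorphism (so $\ell(w_1w_2)$ has the parity of $\ell(w_1)+\ell(w_2)$). Once this is confirmed, Proposition \ref{pro-cocycle-log} immediately yields that $\tau_\rho$ is a normalized $2$-cocycle with $[\tau_\rho]=\delta\rho\in H^2(W,\Z)$.

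Next I would show $\delta\rho\neq 0$. In the proof of Proposition \ref{pro-second-coho-conn}, it was established that $\delta:H^1(W,\C^\times)\map H^2(W,\Z)$ is an isomorphism and that $H^1(W,\C^\times)=\Hom(W,\C^\times)=\{1,\rho\}$. Since $\rho$ is the nontrivial homomorphism, $\delta\rho$ is the unique nontrivial class of $H^2(W,\Z)\cong\Z/2$.

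Finally, Lemma \ref{lem:ext-nontriv} tells us $u_\str\neq 0$, and by Corollary \ref{cor:unique-cent} the nontrivial central extension of $W$ by $\Z$ is unique (under $\nc=1$). Hence $u_\str$ is likewise the unique nontrivial class of $H^2(W,\Z)\cong\Z/2$, so $[\tau_\rho]=u_\str$. There is no real obstacle here—the only minor care needed is in the case analysis confirming the closed form of $\tau_\rho$; everything else is an invocation of prior results.
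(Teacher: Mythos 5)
Your proposal is correct and follows the same route as the paper: apply Proposition \ref{pro-cocycle-log} to get that $\tau_\rho$ is a normalized $2$-cocycle representing $\delta\rho$, note that both $\delta\rho$ and $u_\str$ are nonzero (the former because $\delta$ is an isomorphism and $\rho$ is nontrivial, the latter by Lemma \ref{lem:ext-nontriv}), and conclude they coincide as the unique nonzero element of $H^2(W,\Z)\cong\Z/2$. The paper states this as a one-line combination of Lemma \ref{lem:ext-nontriv}, Corollary \ref{cor:unique-cent}, and Proposition \ref{pro-cocycle-log}; your explicit verification of the closed form $(\ref{eq-cocycle-coxeter-conn})$ is a reasonable addition but not a departure.
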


\section{Commutator subgroups of adjoint groups}\label{sec:commutator}
As was stated in the introduction, Eisermann \cite{MR3205568}*{Example 1.18} claimed that
$\adj(\symqdl)$ is isomorphic to the semidirect product $A_n\rtimes\Z$
where $A_n$ is the alternating group on $n$ letters.
We will generalize his result to Coxeter quandles by showing
the following theorem:
\begin{thm}\label{thm:comm}
$\str:\adj(\cq)\map W$ induces an isomorphism
\[
\str:[\adj(\cq),\adj(\cq)]\overset{\cong}{\map} [W,W].
\]
\end{thm}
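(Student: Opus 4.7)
The plan is to use the abelianization computations from Propositions \ref{pro-abel-adj} and \ref{pro-coxeter-abel} together with the central extension \eqref{cent_ext} to pin down the kernel and image of $\str$ on $[\adj(\cq),\adj(\cq)]$.

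First I would establish surjectivity by a cheap lifting argument. Since $\str:\adj(\cq)\map W$ is surjective, given $w_1,w_2\in W$ I can pick lifts $\tilde w_1,\tilde w_2\in\adj(\cq)$ and form the commutator $[\tilde w_1,\tilde w_2]\in[\adj(\cq),\adj(\cq)]$; then $\str([\tilde w_1,\tilde w_2])=[w_1,w_2]$. Since $[W,W]$ is generated by its commutators, this forces $\str([\adj(\cq),\adj(\cq)])=[W,W]$.

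For injectivity, the point is to show that $\Cw\cap[\adj(\cq),\adj(\cq)]=1$. Note that $[\adj(\cq),\adj(\cq)]=\ker(\Ab_{\adj(\cq)})$, so it suffices to prove that the restriction of the abelianization map $\Ab_{\adj(\cq)}:\adj(\cq)\map\adj(\cq)_\Ab$ to the central subgroup $\Cw$ is injective. By Theorem \ref{thm-main}, $\Cw$ is free abelian with basis $\{e_s^2\mid s\in\rep\}$, and by Proposition \ref{pro-abel-adj}, $\adj(\cq)_\Ab$ is free abelian with basis $\{[e_s]\mid s\in\rep\}$. Under $\Ab_{\adj(\cq)}$ the basis element $e_s^2$ is sent to $2[e_s]$, and the set $\{2[e_s]\mid s\in\rep\}$ is $\Z$-linearly independent in $\adj(\cq)_\Ab$. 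Hence $\Ab_{\adj(\cq)}|_{\Cw}$ is injective, which yields $\Cw\cap[\adj(\cq),\adj(\cq)]=1$ and therefore injectivity of $\str|_{[\adj(\cq),\adj(\cq)]}$.

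There is no serious obstacle: the whole argument rests on the concrete bases supplied by Theorem \ref{thm-main} and Proposition \ref{pro-abel-adj}, together with the trivial observation that $e_s^2\mapsto 2[e_s]$. If one prefers a more structural presentation, the same conclusion follows by applying the snake lemma to the commutative diagram with exact rows
\[
\begin{tikzcd}
1\arrow{r}&[\adj(\cq),\adj(\cq)]\arrow[r,hook]\arrow[d,"\str|"]&\adj(\cq)\arrow[r,"\Ab_{\adj(\cq)}"]\arrow[d,"\str"]&\adj(\cq)_\Ab\arrow[r]\arrow[d,"\str_\Ab"]&1\\
1\arrow{r}&[W,W]\arrow[r,hook]&W\arrow[r,"\Ab_W"]&W_\Ab\arrow[r]&1
\end{tikzcd}
\]
since the connecting map $\ker(\str)=\Cw\cong\Z^{\nc}\map\ker(\str_\Ab)=2\Z^{\nc}$ is precisely the isomorphism $e_s^2\mapsto 2[e_s]$, forcing both $\ker(\str|)$ and $\coker(\str|)$ to vanish.
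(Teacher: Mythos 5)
Your proof is correct and follows essentially the same route as the paper: both arguments reduce injectivity to the fact that $\Ab_{\adj(\cq)}$ restricted to $\Cw$ is injective, using the basis $\{e_s^2\mid s\in\rep\}$ of $\Cw$ from Theorem \ref{thm-main} and the basis $\{[e_s]\mid s\in\rep\}$ of $\adj(\cq)_\Ab$ from Proposition \ref{pro-abel-adj}, via $e_s^2\mapsto 2[e_s]$. The diagram you sketch at the end is exactly the one the paper chases (modulo calling the induced map $\Cw\map\ker\str_\Ab$ a ``connecting map''), so there is nothing further to add.
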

\begin{proof}
Consider the following commutative diagram 
with exact rows and columns.
\[\begin{tikzcd}
& & 1\arrow[d] & 1\arrow[d]& \\
& & \Cw \arrow[r, "{\Ab_{\adj(\cq)}}", "\cong"']\arrow[d,hook]
	& \ker\str_\Ab\arrow[d,hook] & \\
1\arrow[r]  & {[\adj(\cq),\adj(\cq)]} \arrow[r,hook] \arrow[d,"\str"]
	&\adj(\cq) \arrow{d}{\str}\arrow{r}{\Ab_{\adj(\cq)}}
	&\adj(\cq)_\Ab\arrow{r}\arrow{d}{\str_\Ab}&1\\
1\arrow[r]& {[W,W]} \arrow[r,hook]
	&W\arrow{r}{\Ab_W}\arrow{d}
	&W_\Ab\arrow{r}\arrow{d}&1\\
& & 1 & 1&
\end{tikzcd}\]
From Proposition \ref{pro-coxeter-abel} and Proposition \ref{pro-abel-adj}, 
we see that $\ker\str_\Ab$ is the free abelian group with a basis
$\{[e_s^2]\mid s\in\rep\}$, which implies that
$\Ab_{\adj(\cq)}:\Cw\map\ker\str_\Ab$ is an isomorphism
because it assigns $[e_s^2]$ to $e_s^2$ $(s\in\rep)$.
Since $\str$ is surjective, it is obvious that $\str( [\adj(\cq),\adj(\cq)])
=[W,W]$. We will show that $\str:[\adj(\cq),\adj(\cq)]\map [W,W]$ is injective.
To do so, let $g\in  [\adj(\cq),\adj(\cq)]$ be an element with $g\in\ker\str=\Cw$.
Then $[g]:=\Ab_{\adj(\cq)}(g)=1$ by the exactness of the middle row.
But it implies $g=1$ because $\Ab_{\adj(\cq)}:\Cw\map\ker\str_\Ab$
is an isomorphism.
\end{proof}
\begin{cor}\label{gen-eiser}
There is a group extension of the form
\[1\map [W,W]\map\adj(\cq)\overset{\Ab_{\adj(\cq)}}{\longrightarrow}\Z^{\nc}\map 1.\]
If $\nc=1$ then  the extension splits and
$\adj(\cq)\cong [W,W]\rtimes\Z$.
\end{cor}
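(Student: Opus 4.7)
The plan is to recognize the desired sequence as the canonical abelianization short exact sequence for $G=\adj(\cq)$, namely
\[
1\map [G,G]\map G\overset{\Ab_G}{\map} G_\Ab\map 1,
\]
and to identify both endpoints using results already proved. By Proposition \ref{pro-abel-adj}, $G_\Ab\cong\Z^{\nc}$, and by Theorem \ref{thm:comm} the homomorphism $\str$ restricts to an isomorphism $[G,G]\overset{\cong}{\map}[W,W]$. Substituting these identifications immediately yields the extension asserted in the first half of the corollary.

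For the second assertion, when $\nc=1$ the quotient is the free group $\Z$, so any extension by it automatically splits; the actual content is simply to exhibit a section. Fix any $s\in S$. By Proposition \ref{pro-abel-adj}, the class $[e_s]$ generates $\adj(\cq)_\Ab\cong\Z$, so the map $\sigma:\Z\map\adj(\cq)$ defined by $n\mapsto e_s^n$ is a homomorphism (its image lies in the cyclic subgroup $\langle e_s\rangle$), and $\Ab_{\adj(\cq)}(\sigma(1))=[e_s]$ is a generator of $\Z$, so $\sigma$ is a section of $\Ab_{\adj(\cq)}$. By the standard correspondence between split extensions and semidirect products, this yields $\adj(\cq)\cong [W,W]\rtimes\Z$, where $\Z$ acts on $[W,W]$ by conjugation by $e_s$ in $\adj(\cq)$ (equivalently, after identifying $[G,G]$ with $[W,W]$ via $\str$, by $s\in W$).

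There is no substantive obstacle to overcome, since all the heavy lifting has already been carried out in Theorem \ref{thm:comm}, and the remaining content is just the elementary fact that extensions by a free group split. The only micro-subtlety is to verify that the chosen section $\sigma$ hits a generator of $\Z$, which is guaranteed by Proposition \ref{pro-abel-adj}.
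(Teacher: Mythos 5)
Your proof is correct and follows exactly the route the paper intends: the sequence is the abelianization exact sequence for $\adj(\cq)$, with the quotient identified via Proposition \ref{pro-abel-adj} and the kernel via Theorem \ref{thm:comm}, and the splitting for $\nc=1$ comes from the freeness of $\Z$ (with $n\mapsto e_s^n$ as an explicit section). The paper states the corollary without written proof, and your argument supplies precisely the intended details.
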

Since $c(\mathfrak{S}_n)=1$ and $[\mathfrak{S}_n,\mathfrak{S}_n]=A_n$,
we recover the claim by Eisermann mentioned above.
\begin{rem}
By using Theorem \ref{thm:comm}, Kishimoto \cite{arXiv:1706.06209}*{Theorem 2.5}
proved that the following commutative square which appeared in the proof
of Theorem \ref{thm:comm} is actually a pull-back.
\[\begin{tikzcd}
\adj(\cq)\arrow{r}{\Ab_{\adj(\cq)}} \arrow[d,"\str"]& \adj(\cq)_{\text{Ab}}\arrow{d}{\str_\Ab}\\
W\arrow{r}{\Ab_W}&W_{\text{Ab}}
\end{tikzcd}\]
\end{rem}

\section{Root systems}
As was pointed out by Andruskiewitsch-Gra\~na \cite{MR1994219}*{\S1.3.5},
Brieskorn \cite{MR975077}*{p.~58, Example 9},
and Fenn-Rourke \cite{MR1194995}*{Example 10}, 
a root system turns out to be a rack.
In this final section, we turn our attention to root systems.
Let us recall the definition and properties of the root system associated with
a Coxeter system $(W,S)$.
See  Bj\"orner-Brenti \cite{MR2133266}*{Chapter 4} 
or Humphreys \cite{humphreys}*{Part II, Chapter 5} for precise.
Let $V$ be the real vector space with a basis $\{\alpha_s\mid s\in S\}$
in one to one correspondence with $S$.
Define a symmetric bilinear form $B(-,-)$ on $V$ by
\[B(\alpha_s,\alpha_t):=-\cos\frac{\pi}{m(s,t)}.\]
Here we understand $B(\alpha_s,\alpha_t)=-1$ in case $m(s,t)=\infty$.
For each $s\in S$, define a reflection $\sigma_s\in GL(V)$ by
\[
\sigma_s(\lambda)=\lambda-2B(\alpha_s,\lambda)\alpha_s.
\]
The \emph{geometric representation} of $W$ (or the canonical
representation in the literature) is the unique homomorphism
$\sigma:W\map GL(V)$ satisfying $s\mapsto\sigma_s$ $(s\in S)$.
The geometric representaion  is faithful and
preserves the form $B(-,-)$.
For simplicity, we may write $w(\lambda)$
in place of $\sigma(w)(\lambda)$ $(w\in W, \lambda\in V)$.
The \emph{root system} $\rs$ associated with $(W,S)$
is defined by
\[\rs:=\{w(\alpha_s)\mid s\in S,w\in W\}.\]
For each root $\alpha\in\rs$, define a reflection $t_\alpha\in GL(V)$ by
$t_\alpha(\lambda):=\lambda-2B(\alpha,\lambda)\alpha$.
Note that if $\alpha=\alpha_s$ $(s\in S)$ then $t_\alpha=\sigma_s$.
\begin{lem}
The root system $\rs$ turns out to be a rack with the rack operation
$\alpha\ast\beta=t_{\beta}(\alpha)$ $(\alpha,\beta\in\rs)$.
\end{lem}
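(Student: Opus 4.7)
The plan is to check the two rack axioms (2) and (3) directly from the definitions, with the key input being the conjugation formula for reflections on $V$.

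First, I would verify that the operation is well-defined, i.e.\ that $t_\beta(\alpha)\in\rs$ whenever $\alpha,\beta\in\rs$. Writing $\beta=v(\alpha_t)$ for some $v\in W$, $t\in S$, I claim $t_\beta=\sigma(vtv^{-1})$ as elements of $GL(V)$. Indeed, for any $\lambda\in V$,
\[
\sigma(vtv^{-1})(\lambda)=v\sigma_t v^{-1}(\lambda)=v\bigl(v^{-1}\lambda-2B(\alpha_t,v^{-1}\lambda)\alpha_t\bigr)=\lambda-2B(v\alpha_t,\lambda)v\alpha_t=t_\beta(\lambda),
\]
using that the geometric representation preserves $B$. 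Hence if $\alpha=w(\alpha_s)\in\rs$ then $t_\beta(\alpha)=\sigma(vtv^{-1}w)(\alpha_s)\in\rs$, establishing closure. More generally, the same computation applied with $\beta$ replaced by an arbitrary root $w(\gamma)$ yields the key conjugation formula
\[
t_{w(\gamma)}=w\,t_\gamma\,w^{-1}\qquad(w\in W,\ \gamma\in\rs).
\]

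Next I would verify axiom (3): since $t_\beta$ is an involution of $V$, its restriction to $\rs$ is its own two-sided inverse, so $\alpha\mapsto\alpha\qdl\beta=t_\beta(\alpha)$ is a bijection $\rs\to\rs$.

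For axiom (2), I would use the conjugation formula with $w=t_z$ and $\gamma=y$ to get $t_{t_z(y)}=t_z t_y t_z^{-1}=t_z t_y t_z$. Then
\[
(x\qdl z)\qdl(y\qdl z)=t_{t_z(y)}\bigl(t_z(x)\bigr)=t_z t_y t_z t_z(x)=t_z t_y(x)=(x\qdl y)\qdl z,
\]
which is the self-distributivity. This is the only nontrivial step, and the potential obstacle is exactly the conjugation identity above; once it is in place, the axiom is immediate. Together these verifications show that $(\rs,\qdl)$ is a rack.
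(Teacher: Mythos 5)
Your proof is correct, and it takes a somewhat different route from the paper's. The paper verifies only the self-distributivity axiom, by a direct vector computation inside $V$: it expands $(\alpha\ast\beta)\ast\gamma=t_\gamma(\alpha)-2B(\beta,\alpha)t_\gamma(\beta)$ using linearity of $t_\gamma$, expands $(\alpha\ast\gamma)\ast(\beta\ast\gamma)=t_\gamma(\alpha)-2B(t_\gamma(\beta),t_\gamma(\alpha))t_\gamma(\beta)$ straight from the definition of $t_{t_\gamma(\beta)}$, and equates the two via $B(t_\gamma(\beta),t_\gamma(\alpha))=B(\beta,\alpha)$. You instead first isolate the conjugation identity $t_{w(\gamma)}=w\,t_\gamma\,w^{-1}$ and then obtain self-distributivity by a purely formal manipulation --- essentially the argument that any conjugation-closed subset of a group is a rack. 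Both arguments ultimately rest on the same fact, the $B$-invariance of the $W$-action, but yours buys two things the paper leaves implicit: it checks that the operation is well defined (that $t_\beta(\alpha)\in\rs$), and it establishes up front the identity $t_{w(\gamma)}=w t_\gamma w^{-1}$ that the paper only invokes later, with a citation to Bj\"orner--Brenti, in the proof of Lemma \ref{lem:morphism}. The paper's version is shorter and self-contained within $V$; one small addition worth keeping from it is the observation that $\alpha\ast\alpha=-\alpha$, which is why $\rs$ is a rack but not a quandle.
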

\begin{proof}
It suffices to prove $(\alpha\ast\beta)\ast\gamma=
(\alpha\ast\gamma)\ast(\beta\ast\gamma)$ for $\alpha,\beta,\gamma\in\rs$.
One has
\[
(\alpha\ast\beta)\ast\gamma=t_\gamma(\alpha-2B(\beta,\alpha)\beta)
=t_\gamma(\alpha)-2B(\beta,\alpha)t_\gamma(\beta)
\]
while
\[
(\alpha\ast\gamma)\ast(\beta\ast\gamma)=
t_\gamma(\alpha)\ast t_\gamma(\beta)
=t_\gamma(\alpha)-2B(t_\gamma(\beta),t_\gamma(\alpha))t_\gamma(\beta),
\]
and the assertion follows from $B(t_\gamma(\beta),t_\gamma(\alpha))=B(\beta,\alpha)$.
Note that $\rs$ is not a quandle because $\alpha\ast\alpha=t_\alpha(\alpha)=-\alpha$.
\end{proof}
Now let $\cq$ be the Coxeter quandle associated with 
a Coxeter system $(W,S)$ as before, and
define a map $p:\rs\to\cq$ by $\alpha\mapsto\ t_\alpha$.
Here we identify elements of $\sigma(W)$ with those of $W$, 
which is possible since $\sigma$ is injective.
Note that $t_\alpha$ belongs to $Q_W$ because
$t_{\alpha}=wsw^{-1}$ holds for $\alpha=w(\alpha_s)$,
and that the map $p$ is surjective and two-to-one
(see Humphreys \cite{humphreys}*{p.~116} or
Bj\"orner-Brenti \cite{MR2133266}*{p.~104}).
Indeed, $p$ maps $\pm\alpha\in\rs$
to the same element $t_\alpha=t_{-\alpha}$.
\begin{lem}\label{lem:morphism}
The map $p\colon\rs\to\cq$ is a morphism of racks.
\end{lem}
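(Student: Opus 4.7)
The plan is to unwind both sides of the rack-morphism condition $p(\alpha \qdl \beta) = p(\alpha) \qdl p(\beta)$ for $\alpha,\beta \in \rs$, and reduce the equality to the standard conjugation formula for reflections. On the left-hand side, by the definition of the rack operation on $\rs$ we have $p(\alpha \qdl \beta) = p(t_\beta(\alpha)) = t_{t_\beta(\alpha)}$. On the right-hand side, the quandle operation on $\cq$ is conjugation, so $p(\alpha) \qdl p(\beta) = t_\beta^{-1} t_\alpha t_\beta$, which equals $t_\beta t_\alpha t_\beta$ because every reflection has order two. Thus the desired identity collapses to
\[
t_{t_\beta(\alpha)} \;=\; t_\beta \, t_\alpha \, t_\beta.
\]

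To establish this, I would prove the more general statement that $w \, t_\alpha \, w^{-1} = t_{w(\alpha)}$ for every $w \in W$ and every $\alpha \in \rs$, then specialize to $w = t_\beta$. The computation is direct: for $\lambda \in V$,
\[
(w \, t_\alpha \, w^{-1})(\lambda) \;=\; w\bigl(w^{-1}(\lambda) - 2 B(\alpha, w^{-1}(\lambda))\,\alpha\bigr) \;=\; \lambda - 2 B(\alpha, w^{-1}(\lambda))\, w(\alpha).
\]
The key input is that the geometric representation $\sigma \colon W \to GL(V)$ preserves the bilinear form $B$, so $B(\alpha, w^{-1}(\lambda)) = B(w(\alpha), \lambda)$, and the right-hand side becomes $\lambda - 2B(w(\alpha), \lambda)\, w(\alpha) = t_{w(\alpha)}(\lambda)$, as required.

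Setting $w = t_\beta$ yields $t_{t_\beta(\alpha)} = t_\beta \, t_\alpha \, t_\beta$, which gives $p(\alpha \qdl \beta) = p(\alpha) \qdl p(\beta)$ and finishes the proof. There is essentially no obstacle here: the only substantive ingredient is the $W$-invariance of $B$, which is already recorded in the paragraph preceding the lemma, and the rest is formal unwinding of definitions.
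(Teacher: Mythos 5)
Your proof is correct and follows essentially the same route as the paper: both reduce the morphism condition to the conjugation formula $w\,t_\alpha\,w^{-1}=t_{w(\alpha)}$ and then specialize to $w=t_\beta$, using $t_\beta^{-1}=t_\beta$. The only difference is that the paper cites Bj\"orner--Brenti for that formula while you verify it directly from the $W$-invariance of $B$, which is a harmless (indeed self-contained) addition.
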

\begin{proof}
In general, $wt_\alpha w^{-1}=t_{w(\alpha)}$ holds
for $\alpha\in\rs$ and $w\in W$
(see Bj\"orner-Brenti \cite{MR2133266}*{p.~104}). 
Now for $\alpha,\beta\in\rs$, set $\gamma:=t_\beta(\alpha)$ and we have
\[
p(\alpha\ast\beta)=p(\gamma)=t_\gamma
=t_\beta t_\alpha t_\beta^{-1}=t_\beta^{-1} t_\alpha t_\beta=
t_\alpha\ast t_\beta=p(\alpha)\ast p(\beta)
\]
as desired.
\end{proof}

We close this paper by proving the following result:
\begin{thm}\label{thm:root}
The morphism $p\colon\rs\to\cq$ of racks induces an isomorphism
$\adj(\rs)\cong\adj(\cq)$.
\end{thm}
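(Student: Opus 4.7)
The plan is to exhibit an explicit two-sided inverse to the group homomorphism $p_\ast\colon\adj(\rs)\to\adj(\cq)$ induced by $p$, which sends $e_\alpha\mapsto e_{t_\alpha}$. Surjectivity of $p_\ast$ is immediate from surjectivity of $p$, so everything rests on constructing a map in the opposite direction.

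The first key observation is that although $p$ is two-to-one, the rack axiom collapses the sign ambiguity inside $\adj(\rs)$. Since $B(\alpha,\alpha)=1$ for every root, one has $\alpha\ast\alpha=t_\alpha(\alpha)=-\alpha$, and so the defining relation of $\adj(\rs)$ applied to the pair $(x,y)=(\alpha,\alpha)$ yields
\[
e_{-\alpha}=e_{\alpha\ast\alpha}=e_\alpha^{-1}e_\alpha e_\alpha=e_\alpha.
\]
Hence opposite roots define the same generator of $\adj(\rs)$.

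With this in hand, the next step is to define a map $q\colon\cq\to\adj(\rs)$ by $t_\alpha\mapsto e_\alpha$. Since the fibers of $p$ are exactly the pairs $\{\alpha,-\alpha\}$ and $e_\alpha=e_{-\alpha}$, $q$ is well-defined. I would then promote $q$ to a group homomorphism $\tilde q\colon\adj(\cq)\to\adj(\rs)$ by checking the defining relations of $\adj(\cq)$: for $x=t_\alpha$ and $y=t_\beta$ in $\cq$, one has $x\qdl y=t_\beta t_\alpha t_\beta=t_{t_\beta(\alpha)}=t_{\alpha\ast\beta}$ by the formula recalled in Lemma \ref{lem:morphism}, and therefore
\[
q(x\qdl y)=e_{\alpha\ast\beta}=e_\beta^{-1}e_\alpha e_\beta=q(y)^{-1}q(x)q(y)
\]
by the rack relation in $\adj(\rs)$.

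Finally, the composites $p_\ast\circ\tilde q$ and $\tilde q\circ p_\ast$ agree with the identity on the generators $e_{t_\alpha}$ and $e_\alpha$, respectively, so they are mutually inverse and $p_\ast$ is an isomorphism. The main obstacle in this argument is the initial identification $e_\alpha=e_{-\alpha}$ in $\adj(\rs)$; this is precisely the place where the rack (rather than quandle) structure on $\rs$ intervenes, and once it is established the rest is a routine verification of relations.
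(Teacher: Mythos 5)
Your proposal is correct and follows essentially the same route as the paper: the key point in both is that the rack relation applied to $(\alpha,\alpha)$ forces $e_{-\alpha}=e_\alpha$ in $\adj(\rs)$, after which the two-to-one surjection $p$ with fibers $\{\pm\alpha\}$ identifies the two presentations. Your version merely makes the inverse homomorphism explicit where the paper phrases the same argument in terms of rewriting the presentation of $\adj(\cq)$ via $p$.
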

\begin{proof}
Recall that $\adj(\rs)$ is defined by the presentation
\[
\adj(\rs)=\langle e_{\alpha}\ (\alpha\in\rs)\mid
e_{\alpha\ast\beta}=e_\beta^{-1}e_\alpha e_\beta\rangle.
\]
For each $\alpha\in\rs$, we have $e_{\alpha\ast\alpha}=e_{-\alpha}$
while $e_\alpha^{-1}e_\alpha e_\alpha=e_\alpha$,
which proves that $e_{-\alpha}=e_\alpha$.
On the other hand, as $p:\rs\to\cq$ is surjective,
$\adj(\cq)$ may be defined as
\[
\adj(\cq)=\langle e_{p(\alpha)}\ (\alpha\in\rs)\mid
e_{p(\alpha)\ast p(\beta)}=e_{p(\beta)}^{-1}e_{p(\alpha)} e_{p(\beta)}\rangle.
\]
Since $p^{-1}(\alpha)=\{\pm\alpha\}$
and $p(\alpha\ast\beta)=p(\alpha)\ast p(\beta)$  by Lemma \ref{lem:morphism},
the assignment $e_\alpha\mapsto e_{p(\alpha)}$ $(\alpha\in\rs)$
induces an isomorphism $\adj(\rs)\map\adj(\cq)$.
\end{proof}
\begin{rem}
For a (crystallographic) root system $\Phi$ and its Weyl group $W(\Phi)$,
one can prove that $\adj(\Phi)\cong\adj(Q_{W(\Phi)})$ in a similar way.
\end{rem}

\begin{ack}
The author thanks Daisuke Kishimoto and Takefumi Nosaka 
for valuable comments and discussions with the author. 
He also thanks Ye Liu for informing the
paper Digne-Gomi \cite{MR1831679} and careful reading of
drafts of this paper.
This work was partially supported by JSPS KAKENHI Grant Number
26400077, and by the Research Institute for Mathematical Sciences, a Joint
Usage/Research Center located in Kyoto University.
\end{ack}

\begin{bibdiv}
\begin{biblist}

\bib{a-euler}{article}{
   author={Akita, Toshiyuki},
   title={Euler characteristics of Coxeter groups, PL-triangulations of
   closed manifolds, and cohomology of subgroups of Artin groups},
   journal={J. London Math. Soc. (2)},
   volume={61},
   date={2000},
   number={3},
   pages={721--736},
   issn={0024-6107},
   review={\MR{1766100 (2001f:20080)}},
   doi={10.1112/S0024610700008693},
}

\bib{MR2799090}{article}{
   author={Andruskiewitsch, N.},
   author={Fantino, F.},
   author={Garc{\'{\i}}a, G. A.},
   author={Vendramin, L.},
   title={On Nichols algebras associated to simple racks},
   conference={
      title={Groups, algebras and applications},
   },
   book={
      series={Contemp. Math.},
      volume={537},
      publisher={Amer. Math. Soc., Providence, RI},
   },
   date={2011},
   pages={31--56},
   review={\MR{2799090 (2012g:16065)}},
   doi={10.1090/conm/537/10565},
}

\bib{MR1994219}{article}{
   author={Andruskiewitsch, Nicol\'as},
   author={Gra\~na, Mat\'\i as},
   title={From racks to pointed Hopf algebras},
   journal={Adv. Math.},
   volume={178},
   date={2003},
   number={2},
   pages={177--243},
   issn={0001-8708},
   review={\MR{1994219}},
   doi={10.1016/S0001-8708(02)00071-3},
}

\bib{MR2133266}{book}{
   author={Bj\"orner, Anders},
   author={Brenti, Francesco},
   title={Combinatorics of Coxeter groups},
   series={Graduate Texts in Mathematics},
   volume={231},
   publisher={Springer, New York},
   date={2005},
   pages={xiv+363},
   isbn={978-3540-442387},
   isbn={3-540-44238-3},
   review={\MR{2133266}},
}

\bib{bourbaki}{book}{
   author={Bourbaki, Nicolas},
   title={\'El\'ements de math\'ematique},
   language={French},
   note={Groupes et alg\`ebres de Lie. Chapitres 4, 5 et 6. [Lie groups and
   Lie algebras. Chapters 4, 5 and 6]},
   publisher={Masson},
   place={Paris},
   date={1981},
   pages={290},
   isbn={2-225-76076-4},
   review={\MR{647314 (83g:17001)}},
}

\bib{MR975077}{article}{
   author={Brieskorn, E.},
   title={Automorphic sets and braids and singularities},
   conference={
      title={Braids},
      address={Santa Cruz, CA},
      date={1986},
   },
   book={
      series={Contemp. Math.},
      volume={78},
      publisher={Amer. Math. Soc., Providence, RI},
   },
   date={1988},
   pages={45--115},
   review={\MR{975077}},
}

\bib{MR0323910}{article}{
   author={Brieskorn, Egbert},
   author={Saito, Kyoji},
   title={Artin-Gruppen und Coxeter-Gruppen},
   language={German},
   journal={Invent. Math.},
   volume={17},
   date={1972},
   pages={245--271},
   issn={0020-9910},
   review={\MR{0323910}},
   doi={10.1007/BF01406235},
}

\bib{brown}{book}{
   author={Brown, Kenneth S.},
   title={Cohomology of groups},
   series={Graduate Texts in Mathematics},
   volume={87},
   publisher={Springer-Verlag},
   place={New York},
   date={1982},
   pages={x+306},
   isbn={0-387-90688-6},
   review={\MR{672956 (83k:20002)}},
}

\bib{arXiv:1011.1587}{article}{
title={The adjoint group of an Alexander quandle},
author={Clauwens, F.J.-B.J.},
date={2011},
eprint={http://arxiv.org/abs/1011.1587}
}

\bib{MR2360474}{book}{
   author={Davis, Michael W.},
   title={The geometry and topology of Coxeter groups},
   series={London Mathematical Society Monographs Series},
   volume={32},
   publisher={Princeton University Press, Princeton, NJ},
   date={2008},
   pages={xvi+584},
   isbn={978-0-691-13138-2},
   isbn={0-691-13138-4},
   review={\MR{2360474 (2008k:20091)}},
}

\bib{MR1831679}{article}{
   author={Digne, F.},
   author={Gomi, Y.},
   title={Presentation of pure braid groups},
   journal={J. Knot Theory Ramifications},
   volume={10},
   date={2001},
   number={4},
   pages={609--623},
   issn={0218-2165},
   review={\MR{1831679}},
   doi={10.1142/S0218216501001037},
}

\bib{MR3205568}{article}{
   author={Eisermann, Michael},
   title={Quandle coverings and their Galois correspondence},
   journal={Fund. Math.},
   volume={225},
   date={2014},
   number={1},
   pages={103--168},
   issn={0016-2736},
   review={\MR{3205568}},
   doi={10.4064/fm225-1-7},
}

\bib{MR1194995}{article}{
   author={Fenn, Roger},
   author={Rourke, Colin},
   title={Racks and links in codimension two},
   journal={J. Knot Theory Ramifications},
   volume={1},
   date={1992},
   number={4},
   pages={343--406},
   issn={0218-2165},
   review={\MR{1194995}},
}

\bib{MR1778802}{book}{
   author={Geck, Meinolf},
   author={Pfeiffer, G{\"o}tz},
   title={Characters of finite Coxeter groups and Iwahori-Hecke algebras},
   series={London Mathematical Society Monographs. New Series},
   volume={21},
   publisher={The Clarendon Press, Oxford University Press, New York},
   date={2000},
   pages={xvi+446},
   isbn={0-19-850250-8},
   review={\MR{1778802}},
}

\bib{humphreys}{book}{
   author={Humphreys, James E.},
   title={Reflection groups and Coxeter groups},
   series={Cambridge Studies in Advanced Mathematics},
   volume={29},
   publisher={Cambridge University Press},
   place={Cambridge},
   date={1990},
   pages={xii+204},
   isbn={0-521-37510-X},
   review={\MR{1066460 (92h:20002)}},
}

\bib{arXiv:1706.06209}{article}{
title={Right-angled Coxeter quandles and polyhedral products},
author={Kishimoto, Daisuke},
date={2017},
eprint={http://arxiv.org/abs/1706.06209}
}

\bib{MR0183818}{article}{
   author={Matsumoto, Hideya},
   title={G\'en\'erateurs et relations des groupes de Weyl
   g\'en\'eralis\'es},
   language={French},
   journal={C. R. Acad. Sci. Paris},
   volume={258},
   date={1964},
   pages={3419--3422},
   review={\MR{0183818}},
}


\bib{arXiv:1505.03077}{article}{
   author={Nosaka, Takefumi},
   title={Central extensions of groups and adjoint groups of quandles},
   conference={
      title={Geometry and analysis of discrete groups and hyperbolic spaces},
   },
   book={
      series={RIMS K\^{o}ky\^{u}roku Bessatsu, B66},
      publisher={Res. Inst. Math. Sci. (RIMS), Kyoto},
   },
   date={2017},
   pages={167--184},
   review={\MR{3821082}},
}

\bib{nosaka-book}{book}{
   author={Nosaka, Takefumi},
   title={Quandles and topological pairs},
   series={SpringerBriefs in Mathematics},
   note={Symmetry, knots, and cohomology},
   publisher={Springer, Singapore},
   date={2017},
   pages={ix+136},
   isbn={978-981-10-6792-1},
   isbn={978-981-10-6793-8},
   review={\MR{3729413}},
   doi={10.1007/978-981-10-6793-8},
}

\bib{MR2497781}{article}{
   author={Paris, Luis},
   title={Braid groups and Artin groups},
   conference={
      title={Handbook of Teichm\"uller theory. Vol. II},
   },
   book={
      series={IRMA Lect. Math. Theor. Phys.},
      volume={13},
      publisher={Eur. Math. Soc., Z\"urich},
   },
   date={2009},
   pages={389--451},
   review={\MR{2497781}},
   doi={10.4171/055-1/12},
}

\bib{MR3205598}{article}{
   author={Paris, Luis},
   title={$K(\pi,1)$ conjecture for Artin groups},
   language={English, with English and French summaries},
   journal={Ann. Fac. Sci. Toulouse Math. (6)},
   volume={23},
   date={2014},
   number={2},
   pages={361--415},
   issn={0240-2963},
   review={\MR{3205598}},
   doi={10.5802/afst.1411},
}

\bib{MR3207280}{article}{
   author={Paris, Luis},
   title={Lectures on Artin groups and the $K(\pi,1)$ conjecture},
   conference={
      title={Groups of exceptional type, Coxeter groups and related
      geometries},
   },
   book={
      series={Springer Proc. Math. Stat.},
      volume={82},
      publisher={Springer, New Delhi},
   },
   date={2014},
   pages={239--257},
   review={\MR{3207280}},
   doi={10.1007/978-81-322-1814-2\_13},
}

\end{biblist}
\end{bibdiv}

\end{document}